\newcommand{\Vol}{\operatorname{Vol}}
\def\ac{{\overline{\rm ac}}}
\def\Def{\operatorname{Def}}
\def\LPas{\cL_{\rm DP}}
\def\charac{\operatorname{char}}
\def\11{{\mathbf 1}}
\def\FF{{\mathbb F}}
\def\LL{{\mathbb L}}
\def\NN{{\mathbb N}}
\def\QQ{{\mathbb Q}}
\def\RR{{\mathbb R}}
\def\ZZ{{\mathbb Z}}
\def\cA{{\mathcal A}}
\def\cC{{\mathcal C}}
\def\cL{{\mathcal L}}
\def\cM{{\mathcal M}}
\def\cO{{\mathcal O}}
\def\cQ{{\mathcal Q}}
\def\cT{{\mathcal T}}
\newtheorem{thm}[subsubsection]{Theorem}
\newtheorem{lem}[subsubsection]{Lemma}
\newtheorem{cor}[subsubsection]{Corollary}
\newtheorem{prop}[subsubsection]{Proposition}
\newtheorem{maintheorem}[subsubsection]{Main Theorem}
\theoremstyle{definition}
\newtheorem{defn}[subsubsection]{Definition}
\newtheorem{example}[subsubsection]{Example}
\newtheorem{def-prop}[subsubsection]{Proposition-Definition}
\newtheorem{def-theorem}[subsubsection]{Theorem-Definition}
\newtheorem{def-lem}[subsubsection]{Lemma-Definition}
\theoremstyle{remark}
\newtheorem{remark}[subsubsection]{Remark}
\theoremstyle{plain}
\numberwithin{equation}{subsection}
\newcommand{\ord}{\operatorname{ord}}
\def\VF{\mathrm{VF}}
\def\VG{\mathrm{VG}}
\newcommand{\RF}{{\rm RF}}
\begin{document}

\author{Kien Huu Nguyen}
\address{Universit\'e Lille 1, Laboratoire Painlev\'e, CNRS - UMR 8524, Cit\'e Scientifique, 59655 Villeneuve d'Ascq Cedex, France and  Department of Mathematics
Hanoi National University of Education 136 XuanThuy str., CAU GIAY, Hanoi, Vietnam}
\email{hkiensp@gmail.com}

\keywords{Rationality of Poincar\'e series, motivic integration, uniform $p$-adic integration, constructible motivic functions, non-archimedean geometry, subanalytic sets, analytic structure, definable equivalence relations, zeta functions of groups}

\subjclass[2000]{03C60; 03C10 03C98 11M41 20E07 20C15}

\title[Definable, analytic equivalence relations on local fields]
{Uniform rationality of the Poincar\'e series of definable, analytic equivalence relations on local fields}

\begin{abstract} Poincar\'e series of $p$-adic, definable equivalence relations have been studied in various cases since Igusa's and Denef's work related to counting solutions of polynomial equations modulo $p^n$ for prime $p$. General semi-algebraic equivalence relations on local fields have been studied uniformly in $p$ recently in \cite{16}. Here we generalize the rationality result of \cite{16} to the analytic case, unifomly in $p$, building further on the appendix of \cite{16} and on \cite{13b}, \cite{03}. In particular, the results hold for large positive characteristic local fields. We also introduce rational motivic constructible functions and their motivic integrals, as a tool to prove our main results.
\end{abstract}

\maketitle


\section{Introduction}

\subsection{}

After observing that Igusa's and Denef's rationality results (see e.g. \cite{09}, \cite{09b}) can be rephrased in terms of counting the number of equivalence classes of particular semi-algebraic equivalence relations, it becomes natural to consider more general definable equivalence relations in the $p$-adic context and study the number of equivalence classes and related Poincar\'e series. 
The study of uniform $p$-adic, semi-algebraic equivalence relations is one of the main themes of \cite{16}, with general rationality results of the associated Poincar\'e series as part of the main results, generalizing \cite{22P} and \cite{MacUnif}. In the appendix of \cite{16}, a more direct way of obtaining such rationality results was developed in a different case, namely, in the subanalytic setting on $\QQ_p$, generalizing the rationality results by Denef and van den Dries in \cite{11}. A deep tool of \cite{16} to study equivalence relations, called elimination of imaginaries, is very powerful but also problematic since it does not extend well to the analytic setting, see \cite{15}.
In this paper we follow the more direct approach of the appendix of \cite{16} to obtain rationality results in situations where elimination of imaginaries is absent; here, we make this approach uniform in non-archimedean local fields (non-archimedean locally compact field of any characteristic). The two main such situations where this applies come from analytic structures on the one hand, and from an axiomatic approach from \cite{07} on the other hand. In the analytic case, our results generalize the uniform analytic results of \cite{13b}, \cite{03}. We heavily rely on
cell decomposition, a tool which was not yet available at the time of \cite{13b}, and which is obtained more recently in an analytic context in \cite{06}, \cite{06b} and \cite{03} uniformly, and in \cite{02} for any fixed $p$-adic field.
In our approach we also need more general denominators than in previous studies, which we treat by introducing rational motivic constructible functions and their motivic integrals, a slight generalization of the motivic constructible functions from \cite{07}. The adjective `rational' reflects the extra localization of certain Grothendieck semi-rings as compared to \cite{07}. For these integrals to be well-defined, a property called Jacobian property is used; also this property was not yet available at the time of \cite{13b}, and is shown in \cite{06} for analytic structures.

\subsection{}

Let us begin by rephrasing some of the classical results by e.g.~Igusa and Denef in terms of definable equivalence relations.
Let $\mathcal{L}_{0}$ be a first order, multi-sorted language such that $(\QQ_{p},\ZZ,\FF_p)$ are $\mathcal{L}_{0}$-structures for all $p$. A basic example is the ring language on the first sort together with the valuation map $\ord:\QQ_p^\times\to \ZZ$ and the ordering on $\ZZ$.
We consider a formula $\varphi(x,y,n)$ in the language $\mathcal{L}_{0}$ with free variables $x$ and $y$ running over $\QQ_{p}^{m}$ and $n\in\ZZ$. 

Suppose that for each $n\geq 0$ and each prime $p$, the condition $\varphi(x,y,n)$ on $x,y$ yields an equivalence relation $\sim_{p,n}$ on $\QQ_{p}^{m}$ (or on a uniformly definable subset $X_p$ of $\QQ_{p}^{m}$) with finitely many, say $a_{\varphi,p,n}$, equivalence classes.
Then we can consider, for each $p$, the Poincar\'e series associated to $\varphi$ and $p$: 
\begin{equation}\label{eq:P}
P_{\varphi,p}(T)=\sum_{n\geq 0}a_{\varphi,p,n}T^{n}
\end{equation}

For the case is that $\varphi(x,y,n)$ is the collection of equivalence relations $\sim_n$ based on the vanishing of a polynomial $f(x)$ modulo $p^n$, more precisely, for $x,y$ tuples with nonnegative valuation one requires
\begin{equation}\label{eq:f}
\ord ( f(x) , x-y ) \geq n
\end{equation}
with $f\in\ZZ[x_{1},...,x_{m}]$ and where the order of a tuple is the minimum of the orders, the question of the rationality of $P_{\varphi,p}(T)$ was conjectured by Borevich and Shafarevich in \cite{01} and was proved by Igusa in \cite{20} and \cite{21}. 
The proof relied on Hironaka's resolution of singularities and used $p$-adic integration. In \cite{09}, still using $p$-adic integration but using cell decomposition instead of Hironaka's resolution of singularities, Denef proved the rationality of $P_{\varphi,p}$ for more general $\varphi$ than in  (\ref{eq:f}) related to lifting solutions modulo $p^n$ to solutions in $\ZZ_p^m$, answering on the way a question given by Serre in \cite{23}. 
The idea of Denef (using \cite{17} and \cite{08Co}) is to represent a semi-algebraic set by a union of finitely many cells which have a simple description (and so does $\ord f$ on each cell, for $f$ a semi-algebraic function) so that we can integrate easily on them. This was made uniform in $p$ in \cite{22P} and \cite{MacUnif}. The advantage of the approach via cell decomposition is that also more general parameter situations can be understood via parameter integrals, a feature heavily used to get Fubini theorems for motivic integrals \cite{03}, \cite{04}, \cite{07}; for us this approach leads to our main result Theorem \ref{****} below, as a natural generalization of our rationality results Theorems \ref{**} and \ref{***}.

When $f$ in (\ref{eq:f}) is given by a converging power series instead of a polynomial, then the rationality has been obtained in \cite{11} for fixed $p$, in \cite{13b} uniformly in $p$, and in \cite{03} uniformly in $p$ with the extra strength coming from the cell decomposition approach.

The rationality of $P_{\varphi,p}(T)$ as in (\ref{eq:P}) for more general $\varphi$, as well as the uniformity for large $p$ in $\QQ_p$ and in $\FF_p((t))$, is the focus of this paper. A common feature of all the mentioned results is to bundle the information of $P_{\varphi,p}(T)$ into a $p$-adic integral of some kind, and then study these integrals qualitatively. Here, we bundle the information into slightly more general integrals than the ones occurring before.  

\subsection{}
Let us recall the precise result of \cite{16} which states the rationality of $P_{\varphi,p}(T)$ in the semi-algebraic realm, with its uniformity in (non-archimedean) local fields. We recall that a non-archimedean local field is a locally compact topological field with respect to a non-discrete topology such that its topology defines a non-archimedean absolute value. We have known that  any such field is either a finite field extension of $\QQ_p$ for some $p$ or isomorphic to $\FF_q((t))$ for some prime power $q$; we will from now on say \emph{local field} for non-archimedean local field.

Let $\LPas$ be the Denef-Pas language, namely with the ring language on the valued field sort and on the residue field sort, the language of ordered abelian groups on the value group, the valuation map, and an angular component map $\ac$ from the valued field sort to the residue field sort (see Section  \ref{sec:DP}). All local fields $K$ with a chosen uniformizer $\varpi$ are $\LPas$-structures, where the angular component map sends nonzero $x$ to the reduction of $x\varpi^{-\ord x}$ modulo the maximal ideal, and sends zero to zero. Let $\cO_K$ denote the valuation ring of $K$ with maximal ideal $\cM_K$ and residue field $k_K$ with $q_K$ elements and characteristic $p_K$.

Let $\varphi(x,y,n)$ be an $\LPas$-formula with free variables $x$ running over $K^m$, $y$ running over $K^m$ and $n$ running over $\NN$, with $\NN$ being the set nonnegative integers.
Suppose that for each local field $K$ and each $n$, $\varphi(x,y,n)$ gives an equivalence relation $\sim_{K,n}$ on $K^m$ with finitely many, say, $a_{\varphi,K,n}$, equivalence classes. (The situation that $\varphi(x,y,n)$ gives an equivalence relation on a uniformly definable subset $X_{K,n}$ of $K^m$ for each $n$ can be treated similarly, e.g.~by extending with a single extra equivalence class to extend to a relation on $K^m$.)
For each local field $K$ consider the associated Poincar\'e series
$$
P_{\varphi,K}(T) =\sum_{n\geq 0}a_{\varphi,K,n}T^{n},
$$

In \cite{16}, the authors proved the following (as well as a variant by adding constants of a ring of integers to $\LPas$, and by allowing $n$ and $T$ to be tuples of finite length instead of length one; these features are also captured in Theorem \ref{****} below).

\begin{thm}\label{*}
There exists $M>0$ such that the power series  $P_{\varphi,K}(T)$ is rational in $T$ for each local field $K$ whose residue field has characteristic at least $M$. Moreover, for such $K$, the series $P_{\varphi,K}(T)$ only depends on the residue field $k_K$ (namely, two local fields with isomorphic residue field give rise to the same Poincar\'e series).

More precisely, there exist nonnegative integers $a,N,M,k,b_j,e_i,q$, integers $a_j$, and formulas $X_{i}$ in the ring language for $i=0,\ldots,N$ and $j=0,\ldots,k$, such that for each $j, a_{j}$ and $b_{j}$ are not both $0$, $q$ is nonzero, and for all local fields $K$ with residue field of characteristic at least $M$ we have
$$
P_{\varphi,K}(T) = \frac{\sum\limits_{i=0}^{N} (-1)^{e_i}  \#X_{i}(k_K)T^{i}}{q \cdot  q_K^{a}\prod_{j=1}^{k}(1-q_K^{a_{j}}T^{b_{j}})},
$$
where $X_{i}(k_K)$ is the set of $k_K$-points satisfying $X_i$.  
\end{thm}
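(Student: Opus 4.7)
The plan is to package the counts $a_{\varphi,K,n}$ as values of a uniform $p$-adic (motivic) integral in a parameter $n \in \NN$, and then sum the resulting family in $T$. I would proceed in three stages: (i) replace counting equivalence classes by counting a uniformly definable family of representatives, (ii) apply cell decomposition to evaluate this count as a motivic constructible function of $n$, and (iii) perform the geometric summation in $T$ to extract the stated rational form.

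For stage (i), the goal is a uniformly $\LPas$-definable set $Y \subset K^m \times \NN$ whose fibers $Y_{K,n}$ select exactly one representative from each $\sim_{K,n}$-class, so that $\#Y_{K,n} = a_{\varphi,K,n}$. For local fields of residue characteristic at least some $M$, this is where elimination of imaginaries for Denef--Pas structures, in the form used in \cite{16}, enters: equivalence classes whose codes live in the value group or residue field sorts can be replaced by their canonical codes, while definable Skolem functions on the valued field sort, applied cell by cell, pick a canonical valued-field point within each class. For stage (ii), I would apply uniform $p$-adic cell decomposition in the sense of \cite{22P} and \cite{MacUnif} to $Y$ with $n$ treated as a Presburger parameter. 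Each cell contributes a term of shape $q_K^{\alpha(n)} \cdot \#X_i(k_K)$, with $\alpha$ Presburger-affine in $n$ and $X_i$ a ring-language formula on the residue field. Summing over cells yields an expression
\[
a_{\varphi,K,n} \;=\; \sum_{i} (-1)^{e_i}\, \#X_i(k_K)\, q_K^{\alpha_i(n)}\, \11_{S_i}(n)
\]
for a finite Presburger partition $\{S_i\}$ of $\NN$, with the $\alpha_i$, $X_i$ and $S_i$ uniform in $K$ once $p_K \geq M$.

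For stage (iii), summing this in $T$ is elementary once the Presburger data is made explicit: each $S_i$ is a finite union of arithmetic progressions $\{a + bj : j \geq 0\}$, on which the inner sum $\sum_{j \geq 0} q_K^{\alpha_i(a+bj)} T^{a+bj}$ is a geometric series equal, up to the constant factor $q_K^{\alpha_i(a)} T^{a}$, to $1/(1 - q_K^{a_j} T^{b_j})$ for suitable integers $a_j,b_j$. Collecting contributions over finitely many arithmetic progressions produces a common denominator of the stated shape $q \cdot q_K^{a} \prod_{j}(1 - q_K^{a_j} T^{b_j})$ and a numerator that is a $\ZZ$-linear combination of terms $\#X_i(k_K) T^{i}$, yielding the closed form of the theorem; since the residue field only enters through the coefficients $\#X_i(k_K)$, the fact that $P_{\varphi,K}(T)$ depends only on $k_K$ is immediate. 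The principal obstacle is stage (i): choosing representatives definably and uniformly in both $n$ and $K$ requires elimination of imaginaries in an essential way, and this is precisely the semi-algebraic input, flagged in the introduction as being unavailable in analytic structures (see \cite{15}), which the remainder of the paper is devoted to bypassing via a more direct cell-decomposition argument.
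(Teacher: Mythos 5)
Your proposal reproduces, in outline, the proof strategy of \cite{16} for Theorem~\ref{*}: elimination of imaginaries in the geometric sorts to obtain a uniformly definable system of class representatives, uniform cell decomposition \`a la Pas/Macintyre to express the counts $a_{\varphi,K,n}$ as a constructible function of the Presburger parameter $n$, and finally geometric summation in $T$. That is indeed how the paper describes the proof it cites (``Theorem~\ref{*} is shown in \cite{16} by proving general elimination of imaginaries in a language $\mathcal{L}_{\mathcal{G}}$ \dots which allows one to rewrite the data in terms of classical Denef-Pas style uniform $p$-adic integrals''), and you correctly flag that elimination of imaginaries is exactly the step that fails to transfer to analytic expansions, which is the whole point of the paper.

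What the paper itself does, however, is genuinely different. It does not reprove Theorem~\ref{*} via a representative set $Y$; instead it proves the more general Theorems~\ref{**}, \ref{***}, \ref{****} by the route of the appendix to \cite{16}, made uniform in the local field. Concretely: one works with the multibox $MB(x/\!\sim_{K,z})$ of each equivalence class and with the multivolume and the quantities $Multinumber_m$, shows a uniform bound on $Multinumber_m$ (Lemma~\ref{28a} and Corollary~\ref{28b}) and definability of the multivolume (Lemma~\ref{30}), and then builds a rational motivic constructible function $\Phi = g\cdot \LL^{f}/[D]$ in $\tilde{\cC}_+(R)$ whose integral $F=\mu_{/Z}(\Phi)$ evaluates to $a_{\varphi,K,z}$ at each $z$. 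The class count is thus obtained as a fibered integral which averages to $1$ over each equivalence class, rather than by selecting a definable set of representatives. This is what forces the enlargement from $\cQ_+$ to $\tilde{\cQ}_+$ (allowing division by classes $[Y]$ with nonempty fibers), and what makes the argument work in the analytic setting where imaginaries are not eliminated. One further mismatch worth noting in your stage~(i): if definable Skolem functions really picked exactly one valued-field representative per class uniformly, the denominator of Theorem~\ref{*} would need no integer factor $q$; the presence of $q$ reflects that the codes produced by elimination of imaginaries live in quotient sorts of bounded finite fiber, so the reduction to a $p$-adic integral involves dividing by a bounded count rather than a clean bijection. Your sketch glosses over this, whereas the paper's construction makes the division explicit (through $[D]$ and the $\11_{R_K(d)}/d$ terms), and in fact obtains the sharper denominator $\#Y(k_K)$ in Theorems~\ref{**} and \ref{***} in place of $q\cdot q_K^a$.
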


This theorem is furthermore applied in \cite{16} to the theory of zeta functions in group theory.
Theorem \ref{*} is shown in \cite{16} by proving general elimination of imaginaries in a language $\mathcal{L}_{\mathcal{G}}$ called the geometrical language and which expands the language of valued fields. This elimination allows one to rewrite the data in terms of classical (Denef-Pas style) uniform $p$-adic integrals, from which rationality follows uniformly in the local field.

In the appendix of \cite{16}, a more direct but similar reduction to classical $p$-adic integrals is followed, and it is this reduction that is made uniform in the local field in this paper.

An interesting aspect of Theorem \ref{*} is the appearance of the positive integer $q$ in the denominator. In more classical Poincar\'e series in this context (e.g. \cite{09}, \cite{09b}), less general denominators suffice, namely without a factor $q>0$. In this paper we use even more general denominators, namely, we may divide by the number of points on (nonempty and finite) definable subsets over the residue field. We develop a corresponding theory of $p$-adic and motivic integration, of what we call \emph{rational} motivic constructible functions (altering the notion of motivic constructible functions from \cite{04} and \cite{07}). The benefits are that we need not restrict to the semi-algebraic case and that we don't rely on elimination of imaginaries. This allows us to obtain rationality in the uniform analytic contexts from \cite{06b}, \cite{13b}, and \cite{03}, and also in the axiomatic context from \cite{07}.

Let us state our main result to indicate the more general nature of our denominators.

Let $\cT$ be a theory in a language $\cL$ extending $\LPas$. Suppose that $\cT$ has properties ($*$) and ($**$) as in section \ref{sec:ax} below (see Section \ref{sec:ex} for concrete, analytic examples of such $\cT$). Suppose for convenience here that every definable subset in the residue field sort is definable in the language of rings (this assumption is removed in the later form \ref{***} of the main theorem of the introduction).
Let $\varphi(x,y,n)$ be an $\cL$-formula with free variables $x$ running over $K^m$, $y$ running over $K^m$ and $n$ running over $\NN$.
Suppose that for each local field $K$ and each $n$, $\varphi(x,y,n)$ gives an equivalence relation $\sim_{K,n}$ on $K^m$ with finitely many, say, $a_{\varphi,K,n}$, equivalence classes.
For each local field $K$ consider the associated Poincar\'e series
$$
P_{\varphi,K}(T) =\sum_{n\geq 0}a_{\varphi,K,n}T^{n}.
$$

\begin{maintheorem}\label{**}
There exists $M>0$ such that the power series  $P_{\varphi,K}(T)$ is rational in $T$ for each local field $K$ whose residue field has characteristic at least $M$. Moreover, for such $K$, the series $P_{\varphi,K}(T)$ only depends on the residue field $k_K$ (namely, two local fields with isomorphic residue field give rise to the same Poincar\'e series).

More precisely, there exist nonnegative integers $N,M,k,b_j,e_i,$, integers $a_j$ and formulas $X_{i}$ and $Y$ in the ring language for $i=0,\ldots,N$ and $j=0,\ldots,k$, such that for each $j$, $a_{j}$ and $b_{j}$ are not both $0$, and, for  all local fields $K$ with residue field of characteristic at least $M$, $Y(k_K)$ is nonempty and
$$
P_{\varphi,K}(T) = \frac{\sum\limits_{i=0}^{N} (-1)^{e_i}  \#X_{i}(k_K)T^{i}}{\# Y(k_K) \cdot  \prod_{j=1}^{k}(1-q_K^{a_{j}}T^{b_{j}})}.
$$
\end{maintheorem}

As in \cite{16}, our theorem is related to zeta functions of groups, zeta functions of twist isoclasses of characters, the abscissa of convergence of Euler products, etc., but we do not give new applications in this direction as compared to \cite{16}.

In fact, we will give a more general theorem \ref{****} which describes the dependence of the numbers $a_{\varphi,K,n}$ on $n$ (and on completely general parameters) by means of a rational motivic constructible function.

In Section \ref{sec:defn} we recall the conditions on the language $\cL$ from \cite{07}. In Section \ref{sec:rat} we introduce rational motivic constructible functions, their motivic integrals, and their specializations to local fields. In Section \ref{sec:proof} we give some generalizations and the proofs of our main theorems.

\section{Analytic languages}\label{sec:defn}

\subsection{}

In Section \ref{sec:DP} we recall the Denef-Pas language and quantifier elimination in its corresponding theory of henselian valued fields of characteristic zero. In Section \ref{sec:ax} we develop axioms for expansions of the Denef-Pas language and its theory, following \cite{07}. In Section \ref{sec:ex} we recall that certain analytic structures satisfy the axioms from Section \ref{sec:ax}. Based on these axioms, we extend in Section \ref{sec:rat} the motivic integration from \cite{07} to a situation with more denominators.

\subsection{The language of Denef-Pas}\label{sec:DP}

Let $K$ be a valued field, with valuation map $\ord:K^{\times}\rightarrow \Gamma_K$ for some additive ordered  group $\Gamma_K$, $\cO_K$ the valuation ring of $K$ with maximal ideal $\cM_K$ and residue field  $k_K$. We denote by $x\rightarrow\overline{x}$ the projection $\cO_K\rightarrow k_K$ modulo $\cM_K$. An angular  component map (modulo $\cM_K$) on $K$ is a multiplicative map $\ac:K^{\times}\rightarrow k_K^{\times}$ extended by putting $\ac(0)=0$ and satisfying $\ac(x)=\overline{x}$ for all $x$ with $\ord(x)=0$.

The language $\LPas$ of Denef-Pas is the three-sorted language

\begin{center}
$(\mathcal{L}_{\rm ring},\mathcal{L}_{\rm ring},\mathcal{L}_{\rm oag},\ord,\ac)$
\end{center}
with as sorts:

(i) a sort $\VF$ for the valued field-sort,

(ii) a sort $\RF$ for the residue field-sort, and

(iii) a sort $\VG$ for the value group-sort,

the first copy of  $\mathcal{L}_{\rm ring}$ is used for the sort $\VF$, the second copy for $\RF$, the language $\mathcal{L}_{\rm oag}$ is the language $(+,<)$ of ordered abelian groups for $\VG$, $\ord$ denotes the valuation map on non-zero elements of $\VF$, and $\ac$ stands for an angular component map from $\VF$ to $\RF$. 

As usual for first order formulas, $\LPas$-formulas are built up from the $\LPas$-symbols together with variables, the logical connectives $\wedge$ (and), $\vee$ (or), $\neg$ (not), the quantifiers $\exists, \forall$, the equality symbol $=$, and possibly parameters (see \cite{18} for more details).

Let us briefly recall the statement of the Denef-Pas theorem on elimination of valued field quantifiers in the language $\LPas$. Denote by $H_{\ac,0}$ the $\LPas$-theory of the above described structures whose valued field is Henselian and whose residue field of characteristic zero. Then the theory $H_{\ac,0}$ admits elimination of quantifiers in the valued field sort, see \cite{22P}, Thm. 4.1 or \cite{04}, Thm. 2.1.1.

\subsection{Expansions of the Denef-Pas language: an axiomatic approach}\label{sec:ax}

In this section we single out precise axioms needed to perform motivic integration, following \cite{07}. Apart from cell decomposition, the axioms involve a Jacobian property for definable functions and a so-called property ($*$) which requires at the same time orthogonality between the value group and residue field and that the value group has no other structure than that of an ordered abelian group. Although these theories are about equicharacteristic $0$ valued fields, by logical compactness we will be able to use them for local fields of large residue field characteristic.

Let us fix a language $\mathcal{L}$ which contains  $\LPas$ and which has the same sorts as  $\LPas$. Let $\cT$ be an $\cL$-theory containing $H_{\ac,0}$. The requirements on $\cT$ will be summarized in Definition \ref{04} below.

\begin{defn}\label{00}(Jacobian property for a function). Let $K$ be  a valued field. Let $F: B\rightarrow B'$ be a function with $B, B'\subset K$. We say that $F$ has the Jacobian property if the following conditions hold all together:
\begin{itemize}
\item $F$ is a bijection and $B, B'$ are balls in $K$, namely of the form $\{x\mid \ord ( x-a) > \gamma \}$ for some $a\in K$ and $\gamma\in \Gamma_K$,
\item $F$ is $C^{1}$ on $B$ with derivative $F'$,
\item $F'$ is nonvanishing and $\ord(F')$ and $\ac(F')$ are constant on $B$,
\item for all $x,y\in B$  we have
$$\ord(F')+\ord(x-y)=\ord(F(x)-F(y))$$
and
$$\ac(F')\cdot \ac(x-y)=\ac(F(x)-F(y)).$$
\end{itemize}
\end{defn}

\begin{defn}\label{01}(Jacobian property for $\mathcal{T}$). We say that the Jacobian property holds for the $\mathcal{L}$-theory $\cT$ if for any model $\mathcal{K}$ the following holds.

Write $K$ for the $\VF$-sort of $\mathcal{K}$.
For any finite set $A$ in $\mathcal{K}$ and any $A$-definable function $F: K\rightarrow K$ there exists an $A$-definable function
$$f:K\rightarrow S$$
with $S$ a Cartesian product of sorts not involving $K$ such that each infinite fiber $f^{-1}(s)$ is a ball on which $F$ is either constant or has the Jacobian property.
\end{defn}

\begin{defn}\label{02}(Split). We say that  $\mathcal{T}$ is split if the following conditions hold for any model $\mathcal{K}$. Write $K$ for the $\VF$-sort of $\mathcal{K}$.
\begin{itemize}
\item any $\mathcal{K}$-definable subset of $\Gamma_K^{r}$ for any $r\geq 0$ is $\Gamma_K$-definable in the language of ordered  abelian groups $(+,<)$, 
\item for any finite set $A$ in $\mathcal{K}$ and any $r,s\geq 0$, any $A$-definable subset $X\subset k_K^{s}\times \Gamma_K^{r}$ is equal to a finite disjoint union of $Y_{i}\times Z_{i}$ where the $Y_{i}$ are $A$-definable subsets of $k_K^{s}$ and the $Z_{i}$ are $A$-definable subsets of $\Gamma_K^{r}$.
\end{itemize}
\end{defn}
\begin{defn}\label{03}(Finite $b$-minimality). The theory $\mathcal{T}$ is called finitely $b$-minimal if for any model $\mathcal{K}$ of $T$ the following conditions hold. Write $K$ for the $\VF$-sort of $\mathcal{K}$. Each locally constant $\mathcal{K}$-definable function $g:K^{\times}\rightarrow K$ has finite image  and for any finite set $A$ in $K$ and any $A$-definable set $X\subset K$ there exist an $A$-definable function
$$f:X\rightarrow S$$
with $S$ a Cartesian product of the form $k_K^r\times \Gamma_K^t$ for some $r,t$ and an $A$-definable function
$$c:S\rightarrow K$$
such that each nonempty fiber $f^{-1}(s)$ of $s\in S$ is either  the singleton $\lbrace c(s)\rbrace$ or the ball of the form
$$\lbrace x\in K|\ac(x-c(s))=\xi(s), \ord(x-c(s))=\eta(s)\rbrace$$
for some $\xi(s)$ in $k_K$ and some $\eta(s)\in \Gamma_K$.
\end{defn}

Recall that $\cT$ is an $\cL$-theory containing $H_{\ac,0}$, where $\cL$ contains $\LPas$ and has the same sorts  as  $\LPas$.
\begin{defn}\label{04} We say that $\mathcal{T}$ has property ($*$) if it is split, finitely $b$-minimal,  and has the Jacobian property.
\end{defn}

\begin{defn}\label{04b} We say that $\mathcal{T}$ has property ($**$) if all local fields can be equipped with $\cL$-structure and such that, for any finite subtheory $\cT'$ of $\cT$, local fields with large enough residue field characteristic are models of $\cT'$.
\end{defn}

\begin{example}\label{13}
The $\LPas$-theory $H_{\ac,0}$ of Henselian valued field with equicharacteristic $(0,0)$ has properties $(*)$ and ($**$). It even has property ($*$) in a resplendent way, namely, the theory $\cT$ in an expansion $\cL$ of $\LPas$ which is obtained from $\LPas$ by adding constant symbols from a substructure of a model of $H_{\ac,0}$ (and putting its diagram into $\cT$) and by adding any collection of relation symbols on $\RF^n$ for $n\geq 0$ has property ($*$), see \cite{07}, Thm. 3.10 or \cite{04}, Section 7 and Theorem 2.1.1.
\end{example}

Analytic examples of theories with properties ($*$) and ($**$) are given in the next section.

\subsection{Analytic Expansions of the Denef-Pas language}\label{sec:ex}


Our main example is a uniform version (on henselian valued fields) of the $p$-adic subanalytic language of \cite{11}. This uniform analytic structure is taken from \cite{06b} and is a slight generalization of the uniform analytic structure introduced by van den Dries in \cite{13b}; it also generalizes \cite{03}. While van den Dries obtained quantifier elimination results and Ax-Kochen principles, the full property ($*$) is shown in the more recent work \cite{06} and \cite{06b}; see Remark \ref{rem:comp} below for a more detailed comparison. Property ($**$) will be naturally satisfied.

Fix a commutative noetherian ring $A$ (with unit $1\not=0$) and fix an ideal $I$ of $A$ with $I\not=A$. Suppose that $A$ is complete for the $I$-adic topology. By complete we mean that the inverse limit of $A/I^n$ for $n\in \NN$ is naturally isomorphic to $A$. An already interesting example is $A=\ZZ[[t]]$ and $I=t\ZZ[[t]]$. For each $m$, write $A_m$ for
$$A[\xi_1,\ldots,\xi_m]{\,\, }{\widehat{}}\ ,
$$
namely the $I$-adic completion of the polynomial ring $A[\xi_1,\ldots,\xi_m]$, and put $\mathcal{A} = (A_{m})_{m\in\NN}$.

\begin{defn}[Analytic structure]\label{10} Let $K$ be a valued field. An analytic $\mathcal{A}$-structure on $K$ is a collection of ring homomorphisms
$$
\sigma_{m}: A_{m}\to \mbox{ ring of $\mathcal{O}_{K}$-valued functions on } \mathcal{O}_{K}^{m}
$$
for all $m\geq 0$ such that:
\begin{itemize}
\item[(1)] $I\subset \sigma^{-1}_{0}(\mathcal{M}_{K})$,

\item[(2)]  $\sigma_{m}(\xi_{i}) =$ the $i$-th coordinate function on $\mathcal{O}_{K}^{m}, i = 1, . . . , m,$
\item[(3)] $\sigma_{m+1}$ extends $\sigma_{m}$ where we identify in the obvious way functions on $\mathcal{O}_{K}^{m}$ with functions on $\mathcal{O}_{K}^{m+1}$ that do not depend on the last coordinate.
\end{itemize}
\end{defn}

Let us expand the example that $A=\mathbb{Z}[[t]]$, equipped with the $t$-adic topology.
For any field $k$, the natural $\LPas$-structure on $k((t))$ with the $t$-adic valuation has a unique $\mathcal{A}$-structure if one fixes $\sigma_{0}(t)$ (in the maximal ideal, as required by (1)). Likewise, for any finite field extension $K$ of $\QQ_p$, for any prime $p$, say, with a chosen uniformizer $\varpi_K$ of $\cO_K$ so that $\ac$ is also fixed, the natural $\LPas$-structure has a unique $\mathcal{A}$-structure up to choosing $\sigma_{0}(t)$ (in the maximal ideal).

\begin{defn}\label{11}
The $\mathcal{A}$-analytic language $\mathcal{L}_{\mathcal{A}}$ is defined as $\LPas \cup (A_{m})_{m\in\NN}$. An $\mathcal{L}_{\mathcal{A}}$-structure is an $\LPas$-structure which is equipped with an analytic $\mathcal{A}$-structure. Let $\cT_\cA$ be the theory $H_{\ac,0}$ together with the axioms of such $\mathcal{L}_{\mathcal{A}}$-structures.
\end{defn}

\begin{thm}[\cite{06b}]\label{12}
The theory $\cT_\cA$ has property $(*)$. It does so in a resplendent way (namely, also expansions as in Example \ref{13} have property ($*$) ). If $A=\mathbb{Z}[[t]]$ with ideal $I=t\mathbb{Z}[[t]]$, then it also has property ($**$) and every definable subset in the residue field sort is definable in the language of rings.
\end{thm}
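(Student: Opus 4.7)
The plan is to verify each of the three components of property $(*)$ (split, finitely $b$-minimal, Jacobian property) for $\cT_\cA$, then argue that none of the arguments is disturbed by adding constants from a substructure or arbitrary relations on $\RF^n$, and finally handle the $A=\ZZ[[t]]$ case separately. The backbone throughout is a quantifier elimination statement for $\cL_\cA$ in the valued field sort, which in this analytic context is the Weierstrass-preparation-style QE going back to \cite{13b} and refined in \cite{06b}. So the first step is to recall (or set up) the term structure: every $\cL_\cA$-term in $\VF$-variables lying in $\cO_K^m$ can be put, after a term-rewriting using Weierstrass preparation and division, in a form where its $\ord$ and $\ac$ are controlled by a finite product of units times a distinguished polynomial. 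From this all the other properties are derived.

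Next I would deduce the Jacobian property. Given an $A$-definable function $F:K\to K$, use QE to reduce to a piecewise description by terms of the above form; on each piece, Weierstrass preparation expresses $F$ as $u(x)\cdot P(x)$ with $u$ a strong unit (so $\ord u$, $\ac u$ constant) and $P$ a polynomial. Partitioning further by $\ord$ and $\ac$ of $x-c$ for the roots $c$ of $P$ (and its factors over a suitable algebraic extension of the parameter set) gives balls on which $F$ is either constant or satisfies the four conditions of Definition~\ref{00}; the partition is recorded by an auxiliary definable function $f:K\to S$ into residue field times value group. This is exactly the shape required by Definition~\ref{01}. Finite $b$-minimality is proved similarly: the same cell decomposition into Weierstrass cells gives the function $f:X\to k_K^r\times\Gamma_K^t$ and the center $c:S\to K$ demanded by Definition~\ref{03}, and locally constant definable functions $K^\times\to K$ have finite image because on each cell they are given by terms whose value is determined by $(\ord,\ac)$-data factoring through $k_K^r\times\Gamma_K^t$, to which only finitely many values of $K$ can be assigned by an $\cO_K$-valued definable function. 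Splitness reduces to two items: that any definable subset of $\Gamma_K^r$ is already $(+,<)$-definable (Presburger-type reduct, which follows from QE since all analytic terms live in the $\VF$-sort and $\ord$ is the only interaction with $\VG$), and the product decomposition of definable subsets of $k_K^s\times\Gamma_K^r$, which is the orthogonality statement obtained by a routine induction on formula complexity once QE is available. The main obstacle in this part is the Jacobian property: getting uniform control of $\ord(F')$ and $\ac(F')$ on the cells is what forces the fine Weierstrass-type preparation of \cite{06b} rather than the coarser preparation of \cite{13b}.

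For resplendence, observe that adding constants from a substructure of a model and adding arbitrary relation symbols on $\RF^n$ affects neither the Weierstrass preparation (the new constants enter only as additional parameters in the term algebra) nor the orthogonality arguments (new relations live purely on $\RF$ and commute with the reduction steps). Concretely, QE in $\VF$-quantifiers persists because the extra symbols introduce no new $\VF$-terms, so the proofs of (*) go through verbatim with the enlarged parameter sets; this is the same mechanism as in Example~\ref{13} for the algebraic case.

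Finally, specialise to $A=\ZZ[[t]]$, $I=t\ZZ[[t]]$. For property $(**)$, any local field $K$ of residue characteristic $p_K$ can be endowed with an $\cL_\cA$-structure by sending $t\mapsto \varpi_K$ (or any element of $\cM_K$) and extending the $\sigma_m$ by the natural convergent evaluation of power series in $\ZZ[[t]][\xi]{\,}\widehat{\ }\,$ on $\cO_K^m$; convergence is automatic since $t$ goes to $\cM_K$. Given a finite subtheory $\cT'\subset\cT_\cA$, it mentions finitely many Weierstrass axioms and equalities among finitely many series in finitely many $A_m$'s; each such axiom holds in $K$ once $p_K$ is larger than the (finitely many) primes involved in denominators of coefficients that appear in the finitely many required Weierstrass factorisations. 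A standard compactness argument then gives the required $M$. For the last clause, every $f\in A_m$ reduces modulo $\cM_K$ to a polynomial in $(\ZZ/p_K)[\xi_1,\ldots,\xi_m]$, so all $\cL_\cA$-terms become ring terms after passage to the residue field; combined with QE and splitness, this shows that any $\cL_\cA$-definable subset of $k_K^s$ is definable in $\cL_{\rm ring}$. The step most likely to require care is checking that the Weierstrass axioms used in the proof of $(*)$ can indeed be organised into a countable set of sentences each of which is satisfied in $K$ for $p_K$ large — i.e., that the appeal to compactness for $(**)$ is compatible with the concrete form of the Weierstrass preparation produced in the proof of the Jacobian property.
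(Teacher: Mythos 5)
The paper's proof of Theorem~\ref{12} is a short citation argument with a key intermediate step that your proposal omits entirely: by Theorem~3.2.5 of \cite{06b}, one passes from the analytic structure $\cA$ to a \emph{separated} analytic structure $\cA'$ via a definitial expansion $\cL_{\cA'}$ of $\cL_\cA$, and then property~($*$) (resplendently) for $\cT_{\cA'}$ is quoted from Theorem~6.3.7 of \cite{06}. Your sketch instead sets out to reprove the Weierstrass preparation, cell decomposition, Jacobian property, and finite $b$-minimality from scratch for $\cL_\cA$ directly. That is not a wrong strategy in spirit --- it is roughly what \cite{06} and \cite{06b} do internally --- but it is a very different and far more ambitious route than what the paper does, and without the reduction to separated analytic structures you are missing the organizing device that makes the Weierstrass division and preparation arguments go through cleanly (the separated framework distinguishes ``closed'' and ``open'' variables, which is precisely what gives the uniform control of $\ord(F')$ and $\ac(F')$ you identify as the main obstacle). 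As written, your paragraphs on the Jacobian property and finite $b$-minimality are outlines of what ought to be true rather than proofs; the actual technical content lives in the cited theorems, and the paper deliberately does not reproduce it.

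Your treatment of property~($**$) for $A=\ZZ[[t]]$ contains a concrete error. You locate the need for large residue characteristic in ``primes involved in denominators of coefficients'' appearing in Weierstrass factorisations. But for $A=\ZZ[[t]]$ the rings $A_m$ are $\ZZ[\xi_1,\ldots,\xi_m][[t]]$, whose coefficients have no denominators at all, and in any case the Weierstrass properties are facts about the rings $A_m$ themselves, not sentences of $\cT_\cA$. The compactness in ($**$) is needed for an entirely different reason: $\cT_\cA$ contains $H_{\ac,0}$, which asserts residue characteristic zero, and no local field is a model of the full theory. What saves the day is that any \emph{finite} subtheory of $H_{\ac,0}$ excludes only finitely many residue characteristics, while the analytic axioms (that the $\sigma_m$ are ring homomorphisms with properties (1)--(3) of Definition~\ref{10}) are satisfied exactly on every local field once $\sigma_0(t)\in\cM_K$ is chosen. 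Your last clause (reduction of terms modulo $\cM_K$ to polynomials over the residue field, combined with QE and splitness) is fine conceptually, though again the QE invoked by the paper is the one for $\cL_{\cA'}$ from \cite{06}, not a fresh QE for $\cL_\cA$.
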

\begin{proof}
By Theorem 3.2.5 of \cite{06b}, there is a separated analytic structure $\cA'$ such that $\cL_{\cA'}$ is a natural definitial expansion of $\cL_{\cA}$, with natural corresponding theory $\cT_{\cA'}$, specified in \cite{06b}. Now property ($*$) follows from Theorem 6.3.7 of \cite{06} for $\cT_{\cA'}$ (even resplendently). The statements when $A=\mathbb{Z}[[t]]$ and $I=t\mathbb{Z}[[t]]$ are clear (that every definable subset in the residue field sort is definable in the language of rings follows from quantifier elimination for $\cL_{\cA'}$ of  Theorem 6.3.7 of \cite{06}).
\end{proof}

Note that Theorem \ref{12} includes Example \ref{13} as a special case by taking $A=\ZZ$ with $I$ the zero ideal. Other examples of analytic theories that have property $(*)$ can be found in \cite{06b}, see also Section 4.4 of \cite{06}.

\begin{remark}\label{rem:comp}
Let us highlight some of the differences with the uniform analytic structure from \cite{13b}. In \cite{13b}, a variant of Definition \ref{10} of analytic $\cA$-structures is given which is slightly more  stringent, see Definition (1.7) of \cite{13b}. With this notion of (1.7), van den Dries proves quantifier elimination (resplendently) in Theorem (3.9) of \cite{13b}, which implies that the theory is split (see Definition \ref{02} above). However, more recent work is needed in order to prove the Jacobian property and finite $b$-minimality (see Definitions \ref{01} and \ref{03}), and that is done in \cite{06}, Theorem 6.3.7, for separated analytic structures. A reduction (with a definitial expansion) from an analytic $\cA$-structure (as in Definition \ref{10}) to a separated analytic structure (as in \cite{06}) is given in \cite{06b}.
\end{remark}

\section{Rational constructible  motivic functions}\label{sec:rat}

\subsection{}

We introduce rational constructible motivic functions and their motivic integrals, as a variant of the construction of motivic integration in \cite{07}. We will use this variant to prove Theorem \ref{**} and its generalizations \ref{***}, \ref{****}.

Let us fix a theory $\cT$ (in a language $\cL$) with property ($*$). From Section \ref{22} on, we will assume that $\cT$ also has property ($**$), to enable to specialize to local fields of large residue field characteristic, by logical compactness.

Up to Section \ref{subsec:rat}, we recall terminology from \cite{07}. From Section \ref{subsec:rat} on, we introduce our variant of rational constructible motivic functions.

\subsubsection{The category of definable subsets}

By a $\mathcal{T}$-field we mean a valued field $K$ with residue field $k_K$ and value group $\ZZ$, equipped with an $\cL$-structure so that it becomes a model of $\mathcal{T}$. (For set-theoretical reasons, one may want to restrict this notion to valued fields $K$ living in a very large set, or, to consider the class of all $\mathcal{T}$-fields.)

For any integers  $n,m,r\geq 0$ , we denote by $h[n,m,r]$ the functor sending a $\mathcal{T}$-field $K$ to
$$h[n,m,r](K):=K^{n}\times k_K^{m}\times\ZZ^{r}$$

Here, the convention is that $h[0,0,0]$ is the definable subset of the singleton $\{ 0\}$, i.e. $h[0,0,0](K)=\{ 0\}$.

We call a collection of subsets $X(K)$ of $h[n,m,r](K)$ for all $\cT$-fields $K$ a definable subset if there exists an $\mathcal{L}$-formula $\phi(x)$ with free variables $x$ corresponding to elements of $h[m,n,r]$ such that
$$
X(K)=\lbrace x\in h[m,n,r](K)|\phi(x) \mbox{ holds in the $\cL$-structure }(K,k_K,\ZZ)\rbrace
$$
for all $\mathcal{T}$-fields $K$.

A definable morphism $f:X\rightarrow Y$ between two definable subsets $X,Y$ is given by a definable subset $G$ such that $G(K)$ is the graph of a function $X(K)\rightarrow Y(K)$ for all $\mathcal{T}$-fields $K$.

Denote by $\Def(\mathcal{T})$ (or simply $\Def$) the category of definable subsets with definable morphisms as morphisms. If $Z$ is a definable subset, we denote by $\Def_{Z}(\mathcal{T})$ (or simply $\Def_{Z}$) the category of definable subsets $X$ with a specified definable morphism $X\rightarrow Z$; a morphism between $X,Y\in \Def_{Z}$ is a definable morphism $X\to Y$ which makes a commutative diagram with the specified morphisms $X\rightarrow Z$ and $Y\rightarrow Z$. To indicate that we work over $Z$ for some $X$ in $\Def_{Z}$, we will often write $X_{/Z}$.

For every morphism $f:Z\rightarrow Z'$ in $\Def$, by composition with $f$, we can define a functor
$$f_{!}:\Def_{Z}\rightarrow\Def_{Z'}$$
sending $X_{/Z}$ to $X_{/Z'}$. Using the fiber product, we can define a functor
$$f^{*}:\Def_{Z'}\rightarrow\Def_{Z}$$
by sending $Y_{/Z'}$ to $(Y\otimes_{Z'}Z)_{/Z}$.

When $Y$ and $Y'$ are definable sets, we write $Y\times Y'$ for their Cartesian product. We also write $Y[m,n,r]$ for the product $Y\times h[m,n,r]$.

By a point on a definable subset $X$, we mean a tuple $x=(x_{0},K)$ where $K$ is a $\mathcal{T}$-field and $x_{0}\in X(K)$. We write $|X|$ for the collection of all points that lie on $X$.
\subsubsection{Constructible Presburger functions} 
We follow \cite[Section 5, 6]{07}.
Consider a formal symbol $\mathbb{L}$ and the ring
$$
\mathbb{A}:=\mathbb{Z}[\mathbb{L},\mathbb{L}^{-1},\bigcup\limits_{i>0} \frac{1}{1-\mathbb{L}^{-i}}]
$$

For every real number $q>1$, there is a unique morphism of rings $\vartheta_{q}:\mathbb{A}\rightarrow \mathbb{R}$ mapping $\mathbb{L}$ to $q$, and it is obvious that $\vartheta_{q}$ is injective for  $q$ transcendental. Define a partial ordering on $\mathbb{A}$ by setting $a\geq b$ if for every real number with $q>1$ one has $\vartheta_{q}(a)\geq \vartheta_{q}(b)$. We denote by $\mathbb{A}_{+}$ the set $\lbrace a\in \mathbb{A}|a\geq 0\rbrace$.
\begin{defn}\label{14}
Let $S$ be a definable subset in $\Def$. The ring $\mathcal{P}(S)$  of constructible  Presburger functions  on $S$  is the subring  of the ring of functions $|S|\rightarrow \mathbb{A}$ generated by all constant functions $|S|\rightarrow \mathbb{A}$,  by all functions $\widehat{\alpha}: |S|\rightarrow\mathbb{A}$ corresponding to a definable morphism $\alpha:S\rightarrow h[0,0,1]$,  and by all functions $\mathbb{L}^{\widehat{\beta}}:|S|\rightarrow \mathbb{A}$ corresponding to a definable morphism $\beta: S\rightarrow h[0,0,1]$. We denote by $\mathcal{P}_{+}(S)$ the semi-ring consisting of functions in $\mathcal{P}(S)$ wich take values in $\mathbb{A}_{+}$. Let $\mathcal{P}_{+}^{0}(S)$ be the sub-semi-ring of $\mathcal{P}_{+}{S}$ generated by the characteristic functions $\11_{Y}$ of definable subsets $Y\subset S$ and by the constant function $\LL-1$.

If $Z\rightarrow Y$ is a morphism in $\Def$, composition with $f$ yields a natural pullback morphism $f^{*}:\mathcal{P}(Y)\rightarrow\mathcal{P}(Z)$ with restrictions $f^{*}:\mathcal{P}_{+}(Y)\rightarrow\mathcal{P}_{+}(Z)$ and
$f^{*}:\mathcal{P}_{+}^0(Y)\rightarrow\mathcal{P}_{+}^0(Z)$.
\end{defn}

\subsubsection{Rational constructible  motivic functions}\label{subsec:rat}

Definition \ref{15} is taken from \cite{04} \cite{07}. Right after this, we start our further localizations.
\begin{defn}\label{15}
Let $Z$ be a definable subset  in $\Def$. Define the semi-group $\mathcal{Q}_{+}(Z)$  as the quotient of the free abelian semigroup over symbols $[Y]$ with $Y_{/Z}$ a definable subset of $Z[0,m,0]$ with the projection to $Z$, for some $m\geq 0$, by relations
\begin{itemize}
\item[(1)] $[\emptyset\rightarrow Z]=0$,
\item[(2)] $[Y]=[Y']$ if $Y\rightarrow Z$ is isomorphic to $Y'\rightarrow Z$,
\item[(3)] $[(Y\cup Y')]+[(Y\cap Y')]=[Y]+[Y']$
for $Y$ and $Y'$ definable subsets of a common $Z[0,m,0]\rightarrow Z$ for some $m$.
\end{itemize}
The Cartesian fiber product  over $Z$ induces a natural semi-ring  structure on $\mathcal{Q}_{+}(Z)$ by  setting
\begin{center}
$[Y]\times [Y']=[Y\otimes_{Z} Y']$
\end{center}

Now let $\mathcal{Q}_{+}^{*}(Z)$ be the sub-semi-ring of $\mathcal{Q}_{+}(Z)$ given by
$$
\{[Y\overset{f}{\rightarrow}Z] \in \mathcal{Q}_{+}(Z) \mid \forall x\in Z,\ f^{-1}(x)\neq \emptyset\}.
$$
Then, $\mathcal{Q}_{+}^{*}(Z)$ is a multiplicatively closed set of $\mathcal{Q}_{+}(Z)$. So, we can consider the localization $\tilde{\mathcal{Q}}_{+}(Z)$ of $\mathcal{Q}_{+}(Z)$ with respect to $\mathcal{Q}_{+}^{*}(Z)$.

Note that if $f: Z_{1}\rightarrow Z_{2}$ is a morphism in $\Def$ then we have  natural pullback morphisms:
$$f^{*}: \mathcal{Q}_{+}(Z_{2})\rightarrow\mathcal{Q}_{+}(Z_{1})$$
by sending $[Y]\in \mathcal{Q}_{+}(Z_{2})$ to $[Y\otimes_{Z_{2}}Z_{1}]$
and
$$f^{*}: \tilde{\mathcal{Q}}_{+}(Z_{2})\rightarrow\tilde{\mathcal{Q}}_{+}(Z_{1})$$
by sending $\dfrac{[Y]}{[Y']}\in \tilde{\mathcal{Q}}_{+}(Z_{2})$ to $\dfrac{[Y\otimes_{Z_{2}}Z_{1}]}{[Y'\otimes_{Z_{2}}Z_{1}]}$.
One easily checks that these are well-defined.
We write $\LL$ for the class of $Z[0,1,0]$ in $\mathcal{Q}_{+}(Z)$, and, in $\tilde{\mathcal{Q}}_{+}(Z)$.

\end{defn}

\begin{defn}\label{16}
Let $Z$ be in $\Def$. Using the semi-ring morphism $\mathcal{P}_{+}^{0}(Z)\rightarrow\mathcal{Q}_{+}(Z)$ which sends $\11_{Y}$ to $[Y]$ and $\LL-1$ to $\LL-1$, the semi-ring $\mathcal{C}_{+}(Z)$ is defined as follows in \cite[Section 7.1]{07}:
$$\mathcal{C}_{+}(Z)=\mathcal{P}_{+}(Z)\otimes_{\mathcal{P}_{+}^{0}(Z)}\mathcal{Q}_{+}(Z).$$
Elements of $\mathcal{C}_{+}(Z)$ are called (nonnegative) constructible motivic functions on $Z$.
In the same way, we define the semi-ring of rational (nonnegative) constructible motivic functions as
$$\tilde{\mathcal{C}}_{+}(Z)=\mathcal{P}_{+}(Z)\otimes_{\mathcal{P}_{+}^{0}(Z)}\tilde{\mathcal{Q}}_{+}(Z),$$
by using the semi-ring morphism $\mathcal{P}_{+}^{0}(Z)\rightarrow \tilde{\mathcal{Q}}_{+}(Z)$ which sends $\11_{Y}$ to $[Y]$ and $\LL-1$ to $\LL-1$.

If $f: Z\rightarrow Y$ is a morphism in $\Def$ then there is a natural pullback morphism from \cite[Section 7.1]{07}:
$$f^{*}: \mathcal{C}_{+}(Y)\rightarrow\mathcal{C}_{+}(Z)$$
sending $a\otimes b$ to $f^{*}(a)\otimes f^{*}(b)$, where $a\in\mathcal{P}_{+}(Y)$ and $b\in\mathcal{Q}_{+}(Y)$.
Likewise, we have the pullback morphism:
$$f^{*}: \tilde{\mathcal{C}}_{+}(Y)\rightarrow\tilde{\mathcal{C}}_{+}(Z)$$
sending $a\otimes b$ to $f^{*}(a)\otimes f^{*}(b)$, where $a\in\mathcal{P}_{+}(Y)$ and $b\in\tilde{\mathcal{Q}}_{+}(Y)$.
\end{defn}

Since $\mathcal{T}$ is split, the canonical morphism
\begin{equation}\label{eq:otimes}
\mathcal{P}_{+}(Z[0,0,r])\otimes_{\mathcal{P}_{+}^{0}(Z)}\tilde{\mathcal{Q}}_{+}(Z[0,m,0])\rightarrow\tilde{\mathcal{C}}_{+}(Z[0,m,r])
\end{equation}
is an isomorphism of semi-rings, where the homomorphisms $p^{*}:\mathcal{P}_{+}^{0}(Z)\rightarrow\mathcal{P}_{+}(Z[0,0,r])$ and $q^{*}:\mathcal{P}_{+}^{0}(Z)\rightarrow\tilde{\mathcal{Q}}_{+}(Z[0,m,0])$ come from the pullback homomorphism of the two projections $p:Z[0,0,r]\rightarrow Z$ and $q:Z[0,m,0]\rightarrow Z$, similar to \cite[Proposition 7.5]{07}.

\begin{prop}\label{prop:rat}
For $F$ in $\tilde{\mathcal{C}}_{+}(\ZZ^r)$ there exist $[Y]$ in $\cQ_+(h[0,0,0])$ and $G\in \cC_+(\ZZ^r)$ such that one has the equality
$$
\overline{G} = \overline{[Y]}\cdot F
$$
in $\tilde{\mathcal{C}}_{+}(\ZZ^r)$, with $\overline{G}$ the image of $G$ under the natural map
${\mathcal{C}}_{+}(\ZZ^r)\to \tilde{\mathcal{C}}_{+}(\ZZ^r)$ and similarly for $\overline{[Y]}$.
\end{prop}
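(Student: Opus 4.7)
The plan is to exploit the split property of $\cT$, entering via the isomorphism (\ref{eq:otimes}), to replace the possibly $\ZZ^r$-dependent denominators in $F$ by denominators that live entirely over the point $h[0,0,0]$, and then to clear them by multiplying by a common denominator $[Y]$ pulled back from the point.

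First, I would invoke (\ref{eq:otimes}) with $Z=h[0,0,0]$, $m=0$, and the given $r$, which yields a semi-ring isomorphism
$$
\tilde{\cC}_+(\ZZ^r) \;\cong\; \mathcal{P}_+(\ZZ^r) \otimes_{\mathcal{P}_+^0(h[0,0,0])} \tilde{\cQ}_+(h[0,0,0]).
$$
Hence $F$ admits a finite representation $F = \sum_{j=1}^{N} \alpha_j \otimes q_j$ with $\alpha_j\in\mathcal{P}_+(\ZZ^r)$ and $q_j\in\tilde{\cQ}_+(h[0,0,0])$. By Definition \ref{15}, each $q_j$ may be written as $q_j=[Y_j]/[Y'_j]$ with $[Y_j]\in\cQ_+(h[0,0,0])$ and $[Y'_j]\in\cQ_+^*(h[0,0,0])$. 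I then set
$$
[Y] \;:=\; \prod_{j=1}^{N} [Y'_j] \;\in\; \cQ_+^*(h[0,0,0]),
$$
the fibre product over $h[0,0,0]$, which is again everywhere nonempty because each factor is.

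Next, for each $j$ the product $[Y]\cdot q_j = [Y_j]\cdot\prod_{i\neq j}[Y'_i]$ lies in the image of $\cQ_+(h[0,0,0])\to\tilde{\cQ}_+(h[0,0,0])$. Writing $\pi\colon \ZZ^r\to h[0,0,0]$ for the projection, define
$$
G \;:=\; \sum_{j=1}^{N} \alpha_j \otimes \pi^{*}\!\Bigl([Y_j]\cdot\prod_{i\neq j}[Y'_i]\Bigr),
$$
where now the tensor product is taken over $\mathcal{P}_+^{0}(\ZZ^r)$, so that $G\in\cC_+(\ZZ^r)$. Multiplying $F$ by $\overline{[Y]}$ factor by factor in the tensor representation, and using that $\overline{[Y]}\cdot(\alpha_j\otimes q_j)=\alpha_j\otimes([Y]\,q_j)$ in $\tilde{\cC}_+(\ZZ^r)$, one obtains $\overline{G}=\overline{[Y]}\cdot F$, which is the desired equality.

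The main point, rather than an obstacle, is the structural input from the split axiom that produces (\ref{eq:otimes}); once this is granted, the argument is a routine common-denominator manipulation. Without splitting, the denominators $[Y'_j]$ could entangle with the Presburger data on $\ZZ^r$, and no single pullback $[Y]$ from the point would suffice. It is exactly the orthogonality of the residue field and value group sorts afforded by splitting that lets us concentrate all denominators on $h[0,0,0]$ and conclude.
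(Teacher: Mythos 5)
Your proof is correct and follows essentially the same approach as the paper's (which is a one-line citation of the isomorphism (\ref{eq:otimes}), the split property, and the definition of $\tilde{\mathcal{Q}}_{+}(h[0,0,0])$): you simply unwind those three ingredients explicitly, representing $F$ via (\ref{eq:otimes}) with denominators over the point, and clearing them by a common fibre product.
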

\begin{proof}
This follows directly from the isomorphism from (\ref{eq:otimes}), the fact that $\cT$ is split, and the definition of $\tilde{\mathcal{Q}}_{+}(h[0,0,0])$.
\end{proof}

\subsection{Integration of rational constructible motivic functions}
In the three next sections, we give the definition of rational constructible motivic functions and their integrals, which follows the same construction as for integration of constructible motivic functions in \cite{07}, similarly using property ($*$).


\subsubsection{Integration over the residue field}\label{17}

We adapt \cite[Section 6.2]{07} to our setting.
Suppose that $Z\subset X[0,k,0]$ for some  $k\geq 0$ and $a\in \tilde{\mathcal{Q}}_{+}(Z)$, we write $a=\dfrac{[Y]}{[Y']}$ for some $[Y\overset{f}{\rightarrow}Z]\in \mathcal{Q}_{+}(Z)$ and $[Y'\overset{f'}{\rightarrow}Z]\in \mathcal{Q}_{+}^{*}(Z)$. We write $\mu_{/X}$ for the corresponding formal integral in the fibres of the coordinate projection $Z\rightarrow X$
$$\mu_{/X}:\tilde{\mathcal{Q}}_{+}(Z)\rightarrow\tilde{\mathcal{Q}}_{+}(X), \frac{[Y]}{[Y']}\mapsto \frac{[Y]}{[Y'']}$$
where $[Y'']=[Y'\bigsqcup(X\backslash {\rm Im} f')]$, where $\bigsqcup$ denotes the disjoint union (a disjoint union of definable sets can be realized as a definable set by using suitable piecewise definable bijections). Note that $Y''$ is built from $Y$ by using that the class of a definable singleton is the multiplicative unit to preserve the property that the fibers over $Z$ are never empty.
\subsubsection{Integration over the value group}\label{18}

This section follows \cite[Section 5]{07} and will be combined with the integration from section \ref{17} afterwards.
Let $Z\in \Def$ and $f\in\mathcal{P}(Z[0,0,r])$.  For any $\mathcal{T}$-field $K$ and any $q>1$ we write $\vartheta_{q,K}(f):Z(K)\rightarrow \RR$ for the function sending $z\in Z(K)$ to $\vartheta_{q}(f(z,K))$.
Then $f$ is called $Z$-integrable if for each $\mathcal{T}$-field $K$, each $q>1$ and for each $z\in Z(K)$, the family $(\vartheta_{q,K}(f)(z,i))_{i\in \ZZ^{r}}$ is summable.
The collection of $Z$-integrable functions in $\mathcal{P}(Z[0,0,r])$ is denoted by $I_{Z}\mathcal{P}(Z[0,0,r])$ and $I_{Z}\mathcal{P}_{+}(Z[0,0,r])$ is the collection of $Z$-integrable functions in $\mathcal{P}_{+}(Z[0,0,r])$.

We recall from \cite[Theorem-Definition 5.1]{07} that for each $\phi\in I_{Z}\mathcal{P}(Z[0,0,r])$, there exists a unique function $\varphi:= \mu_{/Z}(\phi)$ in $\mathcal{P}_{Z}$ such that for all $q>1$, all $\mathcal{T}$-fields $K$, all $z\in Z(K)$, one has
$$\vartheta_{q,K}(\varphi)(z)=\sum_{i\in\ZZ^{r}}\vartheta_{q,K}(\phi)(z,i)$$
and the mapping $\phi\mapsto\mu_{/Z}(\phi)$ yields a morphism of $\mathcal{P}(Z)$-modules
$$\mu_{/Z}:I_{Z}\mathcal{P}(Z\times\ZZ^{r})\rightarrow\mathcal{P}(Z).$$

\subsubsection{Integration over one valued field variable}\label{19}
We first follow \cite[Section 8]{07} and then use it for  our setting of rational motivic constructible functions in Lemma-Definiton \ref{20} below.
For a ball $B=a+b\mathcal{O}_{K}$ and any real number $q>1$, we call $\vartheta_{q}(B):=q^{-\ord b}$ the $q$-volume of $B$.  A finite or countable collection of disjoint balls in $K$, each with different $q$-volume  is  called a step-domain; we will identify a step-domain $S$ with the union of the balls in $S$. Recall from \cite{07} that a nonnegative real valued function $\varphi: K\rightarrow\RR_{\geq 0}$ is a step-function if there exists a unique step-domain $S$ such that $\varphi$ is constant and nonzero on each ball of $S$ and zero outside $S\cup\{a\}$ for some $a\in K$.

Let $q>1$ be a real number. A step-function $\varphi:K\rightarrow\RR_{\geq 0}$ with step-domain $S$ is $q$-integrable over $K$ if and only if
$$\sum_{B\in S}\vartheta_{q}(B).\varphi(B)<\infty$$

Suppose that $Z=X[1,0,0]$ for some $X\in \Def$ and $\varphi\in\mathcal{P}_{+}(Z)$. We call $\varphi$ an $X$-integrable family of step-functions if for each $\mathcal{T}$-field $K$, for each $x\in X(K)$ and for each $q>1$, the function
$$\vartheta_{q,K}(\varphi)(x,.):K\rightarrow\RR_{\geq 0}, t\mapsto\vartheta_{q,K}(\varphi)(x,t),$$
is a step-function which is $q$-integrable over $K$. For such $\varphi$ there exists a unique function $\phi$ in $\mathcal{P}_{+}(X)$ such that $\vartheta_{q,K}(\phi)(x)$ equals the $q$-integral over $K$ of $\vartheta_{q,K}(\varphi)(x,.)$ for each $\mathcal{T}$-field $K$, each $x\in X(K)$ and each $q>1$; we write
$$
\mu_{/X}(\varphi):=\phi,
$$ 
the integral of $\varphi$ in the fibers of $Z\rightarrow X$.

\begin{def-lem}\label{20}
Let $\varphi\in \tilde{\mathcal{C}}_{+}(Z)$ and suppose that $Z=X[1,0,0]$. We say that $\varphi$ is $X$-integrable  if there exists a $\phi$ in $\mathcal{P}_{+}(Z[0,m,0])$ with $\mu_{/Z}(\phi)=\varphi$ such that $\phi$ is $X[0,m,0]$-integrable  and then
$$
\mu_{/X}(\varphi):=\mu_{/X}(\mu_{/X[0,m,0]}(\phi))\in\tilde{\mathcal{C}}_{+}(X)$$
is independent  of the choices and is called the integral of $\varphi$ in the fibers of $Z\rightarrow X$.
\end{def-lem}
\begin{proof}
This follows using property ($*$) of $\mathcal{T}$ in the same way as in lemma-definition 8.2 of \cite{07}. \end{proof}

\subsubsection{Integration of rational constructible motivic functions in the general case}\label{21}

Combining the three cases above, we define integrability and the integral $\mu_{/X}(\varphi)$ of an integrable rational constructible motivic function $\varphi\in \tilde{\mathcal{C}}_{+}(X[m,n,r])$ by Tonelli-Fubini iterated integration in a similar way as in Lemma-Definition 9.1 of \cite{07}. More precisely, we will define the integrals in the fibers of a general coordinate projection $X[n,m,r]\rightarrow X$ by induction on $n\geq 0$.
\begin{def-lem}Let $\varphi$ be in $\tilde{\cC}_{+}(Z)$ and suppose that $Z=X[n,m,r]$ for some $X$ in $\Def$.

If $n=0$ we say that $\varphi$ is $X$-integrable is and only if $\varphi$ is $X[0,m,0]$-integrable. If this holds then
$$\mu/_{X}(\varphi):=\mu_{/X}(\mu_{/X[0,m,0]}(\varphi))\in \tilde{\cC}_{+}(X)$$
is called the integral of $\varphi$ in the fibers of $Z\rightarrow X$.

If $n\geq 1$, we say that $\varphi$ is $X$-integrable  if there exists a definable subset $Z'\subset Z$ whose complement in $Z$ has relative dimention $<n$ over $X$ such that $\varphi':=\11_{Z'}\varphi$ is $X[n-1,m,r]$-integrable and $\mu_{/X[n-1,m,r]}(\varphi')$ is $X$-integrable. If this holds then
$$ \mu/_{X}(\varphi):=\mu_{/X}(\mu_{/X[n-1,m,r]}(\varphi'))\in \tilde{\cC}_{+}(X)$$
does not depend on the choices and is called the integral of $\varphi$ in the fibers of $Z\rightarrow X$.

Slightly more generally, let $\varphi\in \tilde{\cC}_{+}(Z)$ and suppose that $Z\subset X[n,m,r]$. We say that $\varphi$ is $X$-integrable if the extension by zero of $\varphi$ to a function $\tilde{\varphi}\in \tilde{\cC}_{+}(X[n,m,r])$ is $X$-integrable and we define $\mu_{/X}(\varphi)$ as $\mu_{/X}(\tilde{\varphi})$.
\end{def-lem}
\begin{proof}
Since $\cT$ has property $(*)$ the proof is similar to the proof for Lemma-Definition 9.1 of \cite{07}. 
\end{proof}

Based on (\ref{eq:otimes}) and the definition of integrability, for each $Z$-integrable function $\varphi\in\tilde{\mathcal{C}}_{+}(Z[0,m,r])$, one can write  $\varphi=\sum_{i} a_{i}\otimes b_{i}$, where $a_{i}\in I_{Z}\mathcal{P}_{+}(Z[0,0,r])$ and $b_{i}\in \tilde{\mathcal{Q}}_{+}(Z[0,m,0])$ and
$$\mu_{/Z}(\varphi)=\sum_{i}\mu_{/Z}(a_{i})\otimes \mu_{/Z}(b_{i}).
$$

\subsection{Interpretation of rational constructible motivic functions in non-archimedean local fields}\label{22}

In this section we show how rational constructible motivic functions can be specialized to real valued functions on local fields of large residue field characteristic, in the spirit of the specializations in \cite{08} and Proposition 9.2 of \cite{07}. Importantly, taking motivic integrals combines well with this specialization and integration over the local fields.

Let $\cT$ be a theory in a language $\cL$ extending $\LPas$. Suppose that $\cT$ has properties ($*$) and ($**$) from section \ref{sec:ax}.
For a definable set $X$, a definable function $f$ and a rational motivic constructible function $\phi$, the objects $X_K=X(K)$, $f_K$ and $\phi_K$ make sense for every local field $K$ with large residue field characteristic. We make this explicit for rational motivic constructible functions, where we assume $K$ to be a local field with large residue field characteristic (depending on the data).


\begin{itemize}
\item For $a\in \mathcal{P}_{+}(X)$, we get $a_{K}:X_K\rightarrow \QQ_{\geq 0}$ by replacing $\LL$ by $q_K$.
\item For $b=[Y]$ with $Y$ a definable subset of $X[0,m,0]$ in $\mathcal{Q}_{+}(X)$, if we write $p:Y\rightarrow X$ for the projection, one defines $b_{K}:X_K\rightarrow\QQ_{\geq 0}$ by sending $x\in X_K$ to $\#(p_K^{-1}(x))$
\item In the same way, for $b=\dfrac{1}{[Y]}$ with $Y$ a subset  of $X[0,m,0]$ in $\mathcal{Q}_{+}^{*}(X)$ and projection $p:Y\rightarrow X$, one defines $b_{K}:X_K\rightarrow\QQ_{\geq 0}$ by sending $x\in X_K$ to $\dfrac{1}{\#(p_K^{-1}(x))}$.
\item For $\phi\in\mathcal{C}_{+}(X)$ or $\phi\in\tilde{\mathcal{C}}_{+}(X)$, writing $\phi$ as a finite sum $\sum_{i}a_{i}\otimes b_{i}$ with $a_{i}\in\mathcal{P}_{+}(X)$ and $b_{i}\in \mathcal{Q}_{+}(X)$ or $b_{i}\in\tilde{\mathcal{Q}}_{+}(X)$, we get the function
$$\phi_{K}:X_K\rightarrow\QQ_{\geq 0}, x\mapsto\sum_{i}a_{iK}(x).b_{iK}(x),$$
which does not depend on the choices made for $a_{i}$ and $b_{i}$.
\end{itemize}

Taking motivic integrals commutes with taking specializations, as follows.

\begin{prop}\label{prop:int}
Let $\varphi$ be an $X$-integrable rational constructible motivic function in $\tilde{\mathcal{C}}_{+}(X[m,n,r])$ and let $\mu_{/X}(\varphi)$ be its motivic integral, in the fibers of the projection $X[m,n,r]\to X$. Then there exists $M>0$ such that for all local fields $K$ whose residue field has characteristic at least $M$ one has for each $x\in X_K$
$$
\big(\mu_{/X}(\varphi)\big)_K (x)   =  \int_{y\in K^m\times k_K^n\times \ZZ^r} \varphi_K(x,y) |dy|
$$
where one puts the normalized Haar measure on $K$, the counting measure on $k_K$ and on $\ZZ$, and the product measure $|dy|$ on  $K^m\times k_K^n\times \ZZ^r$.
\end{prop}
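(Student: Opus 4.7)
The plan is to reduce to the analogous commutation result for ordinary constructible motivic functions, namely Proposition 9.2 of \cite{07}, by systematically handling the new denominators coming from the localization $\tilde{\cQ}_+$. The iterated (Tonelli--Fubini) definition of the motivic integral in Section \ref{21} lets one split the full integration over $X[m,n,r] \to X$ into successive integrations over single variables of each sort, and since the specialization $\varphi \mapsto \varphi_K$ is compatible with tensor products and with composition of integrals, it suffices to verify the commutation one variable at a time, in each of the three cases: valued field, residue field, value group.

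For the valued field case ($Z = X[1,0,0]$), I would invoke Lemma-Definition \ref{20}: by definition of integrability one has $\varphi = \mu_{/Z}(\phi)$ with $\phi \in \cP_+(Z[0,m,0])$ an ordinary (non-rational) constructible motivic function which is $X[0,m,0]$-integrable, so that $\mu_{/X}(\varphi) = \mu_{/X}(\mu_{/X[0,m,0]}(\phi))$. Proposition 9.2 of \cite{07} then applies directly to $\phi$ and yields the specialization identity for the inner $K$-integration; the remaining residue field integration is handled by the residue field case treated below.

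For the value group case, use the isomorphism (\ref{eq:otimes}) to decompose $\varphi = \sum_i a_i \otimes b_i$ with $a_i$ Presburger and $b_i \in \tilde{\cQ}_+$ independent of the value group coordinate; the commutation is then the Presburger summation identity of Theorem-Definition 5.1 of \cite{07} applied to each $a_i$, with the factor $b_i$ constant along the summation. For the residue field case, the same decomposition writes $b_i = [Y_i]/[Y_i']$ with $[Y_i'] \in \cQ_+^*$, and the result will follow by combining the formal integration formula from Section \ref{17} with the specialization rule for ratios recalled at the start of Section \ref{22}, together with the classical residue field case of Proposition 9.2 of \cite{07} applied to the numerator and denominator separately.

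The hard part will be ensuring that every denominator $[Y_i']$ specializes to a nowhere-zero integer-valued function on $X_K$ uniformly once the residue characteristic is large enough, so that each quotient of specializations is well-defined and equals the specialization of the corresponding quotient. This is where property $(**)$ and logical compactness enter: the first-order sentence expressing that $p_i': Y_i' \to Z$ is surjective holds in every model of $\mathcal{T}$ (since $[Y_i'] \in \cQ_+^*$), hence by compactness it holds in all local fields whose residue characteristic is at least some $M_i$, giving $\#(p_{iK}'^{-1}(z)) \geq 1$ uniformly. Finiteness of the counts $\#(p_{iK}'^{-1}(z))$ is automatic because the fibers live in a power of $k_K$, which is finite for every local field. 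Taking the maximum of the $M_i$'s over the finitely many summands in a fixed decomposition of $\varphi$ produces the uniform bound $M$.
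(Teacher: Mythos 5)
Your handling of the valued-field and value-group cases, and in particular your identification that the one genuinely new point compared to Proposition 9.2 of \cite{07} is the uniform non-vanishing (and automatic finiteness) of the denominator specializations, resolved by property ($**$) and logical compactness, is correct and is exactly what the paper's terse proof leaves to the reader.

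The residue-field case, however, contains a genuine gap. You propose to apply the classical commutation to the numerator $[Y_i]$ and the denominator $[Y_i']$ separately and then divide. For $Z\subset X[0,k,0]$ with $p:Z\to X$ the coordinate projection and $b=[Y]/[Y']\in\tilde{\cQ}_+(Z)$, the formal rule of Section \ref{17} sends $b$ to $[Y]/[Y'']$ over $X$; specializing numerator and denominator by Proposition 9.2 of \cite{07} produces the single ratio
$$
\Big(\sum_{z\in Z_{K,x}}\#Y_{K,z}\Big)\Big/\#Y''_{K,x},
$$
and when $p$ is surjective (e.g.\ for the projection $Z[0,m,0]\to Z$) one has $\#Y''_{K,x}=\sum_{z\in Z_{K,x}}\#Y'_{K,z}$, so this is a ratio of two fiber-sums. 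But the identity asserted in Proposition \ref{prop:int} requires the sum of local ratios
$$
\sum_{z\in Z_{K,x}}\frac{\#Y_{K,z}}{\#Y'_{K,z}}.
$$
These disagree whenever $[Y']$ varies along the fiber of $p$: a two-point fiber with local ratios $2/1$ and $3/1$ gives $5$ for the sum of ratios but $5/2$ for the ratio of sums. So \emph{applying Proposition 9.2 of \cite{07} to numerator and denominator separately does not yield the required commutation}; that step needs to be replaced, either by showing that the decompositions $\varphi=\sum_i a_i\otimes b_i$ appearing in Section \ref{21} can always be arranged so that the rational part $b_i$ is constant along the residue-field fibers being summed over (and then re-examining the normalization in the formal rule), or by a direct verification of the specialization of $\mu_{/X}$ on $\tilde{\cQ}_+$ in the generality actually used. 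As written, the residue-field step is not justified.
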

\begin{proof}
This follows naturally from the corresponding result for $\cC_+$ instead of $\tilde{\mathcal{C}}_{+}$ (see \cite{08} and Proposition 9.2 of \cite{07}), and the concrete definition of $\tilde{\mathcal{C}}_{+}$. \end{proof}

\section{The uniform rationality for Poincar\'e series of definable equivalence relations}\label{sec:proof}

\subsection{}
We will prove Theorem \ref{***}, which is a slight generalization of the Main Theorem \ref{**}.

Let $\cT$ be a theory in a language $\cL$ extending $\LPas$. Suppose that $\cT$ has properties ($*$) and ($**$) from section \ref{sec:ax}.
Let $\varphi(x,y,n)$ be an $\cL$-formula with free variables $x$ running over $K^m$, $y$ running over $K^m$ and $n$ running over $\NN$.
Suppose that for each local field $K$ and each $n$, $\varphi(x,y,n)$ gives an equivalence relation $\sim_{K,n}$ on $K^m$ with finitely many, say, $a_{\varphi,K,n}$, equivalence classes. (The situation that $\varphi(x,y,n)$ gives an equivalence relation on a definable subset $X_{K,n}$ of $K^m$ for each $K$ and each $n$ is similar.)
For each local field $K$ put
$$
P_{\varphi,K}(T) =\sum_{n\geq 0}a_{\varphi,K,n}T^{n}.
$$

\begin{thm}\label{***}
There exists $M>0$ such that the power series  $P_{\varphi,K}(T)$ is rational in $T$ for each local field $K$ whose residue field has characteristic at least $M$. Moreover, for such $K$, the series $P_{\varphi,K}(T)$ only depends on the $\cL$-structure induced on the residue field sort $k_K$.

More precisely, there exist nonnegative integers $N,k,b_j,e_i$, integers $a_j$ and $\cL$-formulas $X_{i}$ and $Y$ for subsets of some power of the residue field for $i=0,\ldots,N$ and $j=0,\ldots,k$, such that for each $j$, $a_{j}$ and $b_{j}$ are not both $0$, and, for  all local fields $K$ with residue field of characteristic at least $M$, $Y(k_K)$ is nonempty and
$$
P_{\varphi,K}(T) = \frac{\sum\limits_{i=0}^{N} (-1)^{e_i}  \#X_{i}(k_K)T^{i}}{\# Y(k_K) \cdot  \prod_{j=1}^{k}(1-q_K^{a_{j}}T^{b_{j}})}.
$$
\end{thm}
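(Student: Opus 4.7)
The plan is to bundle $(a_{\varphi,K,n})_n$ into a single rational constructible motivic function $\Phi \in \tilde{\cC}_{+}(h[0,0,1])$ whose specialization to local fields of large residue characteristic is $n \mapsto a_{\varphi,K,n}$, and then to sum $\sum_n \Phi(n)T^n$ into a rational function using the Presburger-style formal geometric series inside the rational motivic framework. The first and main step is an integral representation. Apply iterated finite $b$-minimality (Definition \ref{03}) together with the Jacobian property (Definition \ref{01}) to the set $E = \{(x,y,n) : \varphi(x,y,n)\}$, viewing $y$ as the free variable of a family of subsets parametrized by $(x,n)$. This decomposes $E$ into finitely many cells in $y$, each described by definable center functions and residue-field/value-group auxiliary parameters $(\xi,\eta)$. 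From this decomposition one constructs a definable set $D \subset h[m,r,s+1]$ together with a surjective definable map $D \to \{(x,[x]_n,n)\}$ whose fibers through $(x,\xi,\eta,n)$ have a definable, constructible cardinality $N(x,\xi,\eta,n)$ that is uniformly nonzero on $D$; then
\[
a_{\varphi,K,n} \;=\; \int_{(x,\xi,\eta) \in D_n} \frac{1}{N(x,\xi,\eta,n)}\,|dx|\,d\xi\,d\eta,
\]
where $|dx|$ is the Haar measure on $K^m$, $d\xi$ the counting measure on $k_K^r$, and $d\eta$ the counting measure on $\ZZ^s$. Inverting $N$, which is a nonvanishing element of $\cQ_+$ coming from residue-field counts along the cell indexing, is precisely what forces us into the rational framework $\tilde{\cC}_+$ from Section \ref{subsec:rat}.

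Using property $(*)$, the integrand is integrable in the sense of Section \ref{21}, producing $\Phi(n) \in \tilde{\cC}_+(h[0,0,1])$ with $\Phi_K(n) = a_{\varphi,K,n}$ on all local fields of sufficiently large residue characteristic by Proposition \ref{prop:int} (uniformity comes from property $(**)$). By Proposition \ref{prop:rat} applied to $\Phi$, there exist $[Y] \in \cQ_+^{*}(h[0,0,0])$ and $G \in \cC_+(h[0,0,1])$ with $[Y]\cdot \Phi = G$ in $\tilde{\cC}_+(h[0,0,1])$. By the isomorphism (\ref{eq:otimes}) and splitness, $G$ is a finite sum of Presburger-style pieces of the form $\11_A(n)\,\LL^{\alpha(n)}\,[X]$ with $A \subset \NN$ Presburger-definable, $\alpha$ integer-affine in $n$, and $[X] \in \cQ_+(h[0,0,0])$. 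Standard Presburger rationality (cf.~Section 5 of \cite{07}) then sums each such piece times $T^n$ over $n$ into a rational function in $\LL$ and $T$ with denominator $\prod_j(1-\LL^{a_j}T^{b_j})$ and numerator a finite alternating sum of monomials $[X_i]T^i$, yielding
\[
\sum_{n \geq 0} \Phi(n)T^n \;=\; \frac{1}{[Y]} \cdot \frac{\sum_{i} (-1)^{e_i}\,[X_i]\,T^i}{\prod_{j}(1-\LL^{a_j}T^{b_j})}.
\]
Specializing via Section \ref{22} with $\LL \mapsto q_K$, $[X_i] \mapsto \#X_i(k_K)$, $[Y] \mapsto \#Y(k_K)$ (nonzero since $[Y] \in \cQ_+^{*}$) and invoking Proposition \ref{prop:int} to commute specialization with the motivic integral, and hence with the coefficient-wise generating series, produces the claimed rational form. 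Dependence only on the $\cL$-structure of $k_K$ is then clear because all surviving data live in the residue-field sort after specialization.

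The main obstacle is the first step, i.e.\ the construction of $D$ and $N$. In \cite{16}, the semi-algebraic case used elimination of imaginaries in the geometric language to realize the quotient directly as a definable set; this tool is unavailable in the analytic setting (\cite{15}), so one must instead exploit the cell decomposition of $\varphi$ to manufacture, canonically and uniformly in $(x,n)$, a definable parametrization of the classes by residue-field and value-group data. Inverting the resulting constructible multiplicity is precisely why one needs the rational framework $\tilde{\cC}_+$ of Section \ref{subsec:rat} rather than the integer framework $\cC_+$; this is the analytic-uniform upgrade of the appendix of \cite{16} and is where properties $(*)$ and $(**)$ are jointly essential.
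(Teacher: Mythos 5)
Your high-level architecture matches the paper exactly: encode $n\mapsto a_{\varphi,K,n}$ by a rational motivic constructible function $\Phi\in\tilde\cC_+(h[0,0,1])$, clear denominators with Proposition \ref{prop:rat} to write $[Y]\cdot\Phi=\overline G$ with $G\in\cC_+$, and then invoke splitness plus the Presburger rationality of \cite{22P} to sum $\sum_n G(n)T^n$ into the desired closed form. That is precisely how the paper deduces Theorem \ref{***} from Theorem \ref{****}. The gap is in the step you yourself flag as ``the main obstacle,'' which you describe but do not actually carry out, and the description you give is not quite right.

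Concretely, you propose $a_{\varphi,K,n}=\int_{(x,\xi,\eta)\in D_n}\frac{1}{N(x,\xi,\eta,n)}\,|dx|\,d\xi\,d\eta$ with $N$ a ``definable, constructible cardinality.'' Since $|dx|$ is the Haar measure on $K^m$ and each equivalence class has positive (and $n$- and class-dependent) Haar measure, dividing by a purely combinatorial multiplicity $N$ cannot make the integral over a single class equal to $1$; you need a volume-renormalizing factor as well. In the paper's construction the integrand is $\Phi=g\cdot\LL^{f}/[D]$, where $\LL^{f}$ specializes to $q_K^{f}$, the reciprocal of the multivolume of the class, and $g$ and $[D]$ encode finite multiplicities coming from the multiball/multibox structure (Definitions \ref{23}--\ref{25}, Lemmas \ref{28a}, \ref{30} and Corollary \ref{28b}). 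The uniform boundedness of $\operatorname{Multinumber}_m$ (Corollary \ref{28b}) is what makes the $1/d$ factor in $g$ live in $\tilde\cQ_+$, and the Fubini computation showing $\int_{y\in X}\frac{g\,q_K^{f}}{\#D}\,|dy|=1$ for each class $X$ is the heart of the argument. Your proposal gestures at cell decomposition yielding $D$ and $N$ but supplies neither the multivolume normalization nor the boundedness input, and the codomain $\{(x,[x]_n,n)\}$ you write is not a definable set in the absence of elimination of imaginaries, which is exactly the obstruction the paper's multibox machinery is designed to circumvent. So while the outer reduction is correct, the existence of the claimed integral representation is asserted rather than established, and as written the integrand would not reproduce $a_{\varphi,K,n}$.
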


In fact, Theorem \ref{***} is a consequence of the following more versatile theorem:

\begin{maintheorem}\label{****}
Suppose that $\cT$ has properties ($*$) and ($**$). Let $\varphi(x,y,z)$ be an $\cL$-formula with free variables $x$ running over $\VF^n$, $y$ running over $\VF^n$ and $z$ running over an arbitrary $\cL$-definable set $Z$. Let $R$ be a definable subset of $\VF^n\times Z$.
Suppose that for each local field $K$, and each $z\in Z_K$, $\varphi(x,y,z)$ gives an equivalence relation $\sim_{K,z}$ on $R_{K,z}:=\{x\in K^n\mid (x,z) \in R_K\}$ with finitely many, say, $a_{\varphi,K,z}$, equivalence classes.
Then there exist a rational motivic function $F$ in $\tilde{\mathcal{C}}_{+}(Z)$ and a constant $M>0$ such that for each local field $K$ whose residue field has characteristic at least $M$ one has
$$
a_{\varphi,K,z} = F_K(z).
$$
\end{maintheorem}

\begin{proof}[Proof of Theorem \ref{***}]
Theorem \ref{***} follows from Theorem \ref{****} by Proposition \ref{prop:rat}, the fact that $\cT$ is split, and the rationality result Theorem 7.1 of \cite{22P}.
\end{proof}

Before giving the proof of Theorem \ref{****} we give a few more definitions and lemmas.

\subsection{Multiballs, multiboxes, and their multivolumes}

We give definitions which are inspired by concepts of the appendix of \cite{16}.

Fix a local field $K$. Recall that $q_K$ stands for the number of elements of the residue field $k_K$ of $K$. We implicitely use an ordering of the coordinates on $\cO_K^n$ in the following definition.
\begin{defn}\label{23}
Let $n\geq 1$, $0\leq r_{i}\leq +\infty$ for $i=1,...,n$ and let a nonempty set $Y\subset\mathcal{O}_{K}^{n}$ be given.

If $n=1$ then $Y$ is called a multiball of multivolume $q_K^{-r_{1}}$ if  $\Vol(Y)=q_K^{-r_{1}}$ and $Y$ is a singleton in the case that $r_{1}=+\infty$ and $Y$ is a ball in the case that $r_{1}\neq +\infty$. Here the volume $\Vol$ is taken for the Haar measure on $K$ such that $\cO_K$ has measure $1$ and we consider $q_K^{-\infty}$ to be zero.

If $n\geq 2$, then $Y$ is called a multiball of multivolume $(q_K^{-r_{1}},...,q_K^{-r_{n}})$ if and only $Y$ is of the form
\begin{center}
$\lbrace (x_{1},...,x_{n})\mid  (x_{1},...,x_{n-1})\in A,\  x_{n}\in B_{x_{1},...,x_{n-1}}\rbrace,$
\end{center}
\begin{flushleft}
where $A\subset \mathcal{O}_{K}^{n-1}$ is a multiball of multivolume $(q_K^{-r_{1}},...,q_K^{-r_{n-1}})$, $B_{x_{1},...,x_{n-1}}$ is a subset of $\mathcal{O}_{K}$ which may depend on $(x_{1},...,x_{n-1})$ with $\Vol(B_{x_{1},...,x_{n-1}})=q_K^{-r_{n}}$ and $B_{x_{1},...,x_{n-1}}$ is either a ball or a singleton. The multivolume of a multiball $Y$ is denoted by $multivol(Y)$.
\end{flushleft}
\end{defn}
\begin{defn}\label{24} We put the inverse lexicographical order (the colexicographical order) on $\RR^{n}$, namely, $(a_{1},...,a_{n})>(b_{1},...,b_{n})$ if and only if there exists $1\leq k\leq n$ such that $a_{i}=b_{i}$ for all $i>k$ and $a_{k}>b_{k}$. By this order, we can compare multivolumes.
Let $X\subset \mathcal{O}_{K}^{n}$. The multibox of $X$, denoted by $MB(X)$, is the union of the multiballs $Y$ contained in $X$ and with maximal multivolume (for the colexicographical ordering), where maximality is among all multiballs contained in $X$. We write $multivol(X)$ for $multivol(Y)$ for any multiball $Y$ contained in $X$ with maximal multivolume.
\end{defn}

Note that taking $MB$ and taking projections does not always commute and it may be that $p(MB(X))$ and $MB(p(X))$ are different, say, with $p:\cO_K^n\to \cO_K^{n-1}$ the coordinate projection to the first $n-1$ coordinates. 

\begin{defn} \label{25} Fix $1\leq m\leq n$, a set $X\subset \mathcal{O}_{K}^{n}$, and $x=(x_{1},...,x_{n})$ in $MB(X)$. Set $x_{\leq m}=(x_{1},...,x_{m})$ and let $X(m)$ be the image of $MB(X)$ under the projection from $\mathcal{O}_{K}^{n}$ to $\mathcal{O}_{K}^{m}$. Denote by $X(x,m)$ the fiber of $X(m)$ over $x_{\leq m-1}$. We write
$$multinumber_{m}(X,x)$$
for the number of balls with maximal volume contained in $X(x,m)$. Write
$$
Multinumber_{m}(X,x)
$$
for the number of balls $B$ of minimal volume with $B\cap X(x,m)\neq\emptyset$ and with $B\nsubseteq X(x,m)$. 
\end{defn}

Note that the number of balls in $\cO_K$ of any fixed volume is automatically finite.

\subsection{Definable equivalence relations }

From now on we suppose that $\cT$ has properties ($*$) and ($**$).
Let $X$ be a definable set. 
Of course, $multinumber_{m}(X_K,x)$ and $Multinumber_{m}(X_K,x)$ may vary with $K$ and $x$. In fact, 
it is difficult to give a uniform in $K$ estimate for $multinumber_{m}(X_K,x)$ but for $Multinumber_{m}(X_K,x)$ we can do it, even in definable families, see Lemma \ref{28a} and its corollary.
\begin{lem}\label{28a} 
Let $Z$ and $X\subset \VF\times Z$ be $\cL$-definable such that $X_K\subset \cO_K\times Z_K$ for all local fields $K$ of large residue field characteristic. Then
there exist positive integers $M$ and $Q$ such that, for all local fields $K$ with residue field characteristic at least $M$ and for all $z\in Z_K$, one has
$$N_{K,z}\leq Q,$$
where $N_{K,z}$ is the number of balls $B$ of minimal volume with
$$
B\cap MB(X_{K,z})\neq\emptyset\mbox{ and } B\nsubseteq MB(X_{K,z}),
$$ and where $X_{K,z}$ is the set $\{x\in \cO_K\mid (x,z)\in X_K\}$.
\end{lem}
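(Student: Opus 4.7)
The plan is to analyze $MB(X_{K,z})$ via the cell decomposition afforded by finite $b$-minimality (Definition~\ref{03}), applied to the family $X\subset\VF\times Z$ with $z$ treated as parameter. This yields an $\cL$-definable map $f:X\to S$ with $S\subset Z\times k^{r}\times\ZZ^{t}$ for some $r,t$, whose nonempty fibres are either singletons $\{c(s)\}$ or balls of the form $\{x:\ac(x-c(s))=\xi(s),\ \ord(x-c(s))=\eta(s)\}$, for $\cL$-definable $c,\xi,\eta$. Thus for each local field $K$ (of large residue characteristic) and each $z\in Z_{K}$, the set $X_{K,z}$ is a disjoint union of such cell-balls over $s\in S_{K,z}$, each of volume $q_{K}^{-\eta(s)-1}$, and $\eta^{*}(K,z):=\min\{\eta(s):s\in S_{K,z}\}$ is $\cL$-definable in $z$ by the split property of $\cT$ and the Presburger structure on $\ZZ$.

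The next step is to identify $MB(X_{K,z})$ with a sub-family of cells. This requires a refinement of the cell decomposition, since a priori several cells of smaller volume may combine into a ball of volume strictly larger than $q_{K}^{-\eta^{*}(K,z)-1}$, which would belong to $MB(X_{K,z})$ without being itself a single cell. I would obtain the refinement by iterating the cell decomposition on the auxiliary $\cL$-definable function $x\mapsto\min\{r:\{y:\ord(y-x)\geq r\}\subseteq X_{K,z}\}$, which records the exponent of the maximal ball around $x$ in $X_{K,z}$ and is definable via the Jacobian property. After this refinement, $MB(X_{K,z})$ equals the disjoint union of cells $f^{-1}(s)$ with $\eta(s)=\eta^{*}(K,z)$. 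Each such cell sits inside the parent ball $c(s)+\varpi^{\eta^{*}(K,z)}\cO_{K}$, whose identity depends only on $c(s)$ modulo $\varpi^{\eta^{*}(K,z)}$ and not on $\xi(s)$. Therefore $N_{K,z}$ equals the number of distinct such residue classes of $c(s)$ over the slice $\{s\in S_{K,z}:\eta(s)=\eta^{*}(K,z)\}$.

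To conclude with a uniform bound $Q$, I would use that only finitely many ``cell types'' (definable branches indexed by $k^{r}\times\ZZ^{t}$) appear, and that for each type, applying finite $b$-minimality to the center function $c$ produces only boundedly many residue classes modulo $\varpi^{\eta^{*}}$, independently of $K$ and $z$. As a fallback, a compactness argument using property $(**)$ closes the proof: for each fixed $Q$, the sentence ``$N_{K,z}\leq Q$ for all $z$'' is first-order in $\cT$, and if no $Q$ sufficed then compactness would produce a $\cT$-field contradicting the structure above.

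The main obstacle I anticipate is the refinement step preventing cell-combinations. This phenomenon happens exactly when the $\xi$-values of several cells exhaust the residue field above a common parent ball, so that a full parent ball of volume $q_K\cdot q_{K}^{-\eta(s)-1}$ lies in $X_{K,z}$ although no single cell has that volume. The $\cL$-definable nature of $c$, together with the Jacobian property of $\cT$, should ensure that the refinement terminates in boundedly many steps depending only on the defining formula of $X$, but pinning down this ``termination count'' is the genuine technical heart of the argument.
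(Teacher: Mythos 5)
Your plan correctly identifies that finite $b$-minimality and compactness are the engines of the proof, and your reduction to counting residue classes of centers is in the right spirit; however the route you take has two genuine gaps, both of which the paper sidesteps by a different and substantially simpler organization of the argument.

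First, the paper does not cell-decompose $X$ and then try to ``refine'' the cells until they align with the maximal balls of $X_{K,z}$. Instead it observes at the outset that the multibox is itself given by a definable condition, so one may take $X'\subset X$ with $X'_{K,z}=MB(X_{K,z})$ and apply finite $b$-minimality (plus compactness, via $(*)$ and $(**)$) directly to $X'$. This completely avoids the refinement issue you flag as ``the genuine technical heart''. And crucially, even after this, the cells $f^{-1}(z,s)$ need \emph{not} coincide with the maximal balls of $MB(X_{K,z})$; your claim that after refinement ``$MB(X_{K,z})$ equals the disjoint union of cells $f^{-1}(s)$ with $\eta(s)=\eta^*$'' is exactly the hidden assumption that fails in general. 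Several ball-cells, together with singleton-cells, may unite into one maximal ball. The cell decomposition you get is a cover of $MB$ by cells, not an identification of cells with maximal balls, and no amount of re-applying $b$-minimality to the auxiliary radius function forces this identification. (As a side remark, the definability of $x\mapsto\min\{r:\{y:\ord(y-x)\geq r\}\subseteq X_{K,z}\}$ is routine first-order logic plus the split property applied to definable subsets of $\ZZ$; it has nothing to do with the Jacobian property, which concerns derivatives of $\VF\to\VF$ maps.)

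The point the paper exploits, and which your argument is missing, is that one does not need the cells to be the maximal balls. Let $\alpha=\alpha(K,z)$ be the common valuative radius of the maximal balls of $MB(X_{K,z})$ and let $B$ be any boundary ball (volume $q_K^{-\alpha+1}$, meeting $MB$ but not contained in it). Picking $y\in B\cap MB(X_{K,z})$ and writing $f_K(y)=(z,s)$, the cell-fiber $f^{-1}_K(z,s)$ is contained in $MB(X_{K,z})$, hence in the maximal ball of $MB$ through $y$, which has volume $q_K^{-\alpha}$; this forces $t_K(z,s)\geq\alpha-1$, i.e.\ $\ord(y-c_K(z,s))\geq\alpha-1$, i.e.\ $c_K(z,s)\in B$. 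So \emph{every boundary ball contains a center}, and since distinct boundary balls are disjoint, $N_{K,z}$ is at most the cardinality of the range of $s\mapsto c_K(z,s)$, which is uniformly bounded by $(*)$ and compactness. This argument needs no cell-ball alignment whatsoever.

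Finally, your compactness ``fallback'' does not close the gap on its own. Property $(**)$ and compactness would indeed give a $\cT$-model in which boundedness fails, but the desired contradiction with ``the structure above'' presupposes the very refinement claim you haven't established. Compactness is used in the paper at a different place: to pass the $b$-minimal cell data for arbitrary $\cT$-models to local fields of large residue characteristic and to get the uniform bound on the range of the center function. Without the ``every boundary ball contains a center'' observation, the argument does not terminate.
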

\begin{proof}
Write $X'$ for the definable subset of $X$ such that $X'_{K,z} = MB(X_{K,z})$ for each $K$ with large residue field characteristic and each $z\in Z_K$.
 Since $\mathcal{T}$ has properties $(*)$, $(**)$, and by logical compactness, there exist positive integers $M$, a Cartesian product $S$ of sorts not involving the valued field sort and $\mathcal{L}$-definable functions $f$, $c$, $\xi$, $t$, such that for all local fields $K$ with residue field has characteristic at least $M$, we have
\begin{itemize}
\item $f_{K}:X'_{K} \rightarrow Z_K\times S_K$ is a function over $Z_{K}$ (meaning that $f_{K}$ makes a commutative diagram with the projections $Z_K\times S_K\to Z_K$ and $X'_K\to Z_K$);
\item $c_{K}: Z_K\times S_K \rightarrow \mathcal{O}_{K}$, $\xi_{K}: Z_K\times S_K\rightarrow k_{K}$,  and $t_{K}: Z_K\times S_K \rightarrow \ZZ$ are such that each nonempty fiber $f_{K}^{-1}(z,s)$, for $(z,s)\in Z_K\times S_K$, is either the singleton
    $$\{c_{K}(z,s)\}$$
     or the ball
    $$\{y\in\mathcal{O}_{K}|\ac(y-c_{K}(z,s))=\xi_{K}(z,s),\ \ord(y-c_{K}(z,s))=t_{K}(z,s)\}.
    $$
\end{itemize}

One derives from property ($*$) and compactness (as in \cite{07}) that there exists an integer $Q_0>0$ such that for for all local fields $K$ with large residue field characteristic, for each $z\in Z_K$, the range of $s\mapsto c_K(z,s)$ has no more than $Q_0$ elements. We will show that we can take $Q=Q_0$. 

Suppose first that $X'_{K,z}$ is a disjoint union of balls of volume $q_{K}^{-\alpha(K,z)}$ where $$
q_{K}^{-\alpha(K,z)}=multivol(X_{K,z}).
$$
Choose a ball $B$ with volume $q_{K}^{-\alpha(K,z)+1}$ and with  $ B\cap X'_{K,z}\neq\emptyset$, fix $y\in B\cap X'_{K,z}$ and write $f_{K}(y)=(z,s)$, so that $y$ belongs to the ball
$$
B'=\{v\in\mathcal{O}_{K}|\ac(v-c_{K}(z,s))=\xi_{K}(z,s),\ \ord(v-c_{K}(z,s))=t_{K}(z,s)\},
$$
which has volume $q_{K}^{-t_{K}(z,s)-1}$. Since $X'_{K,z}=MB(X_{K,z})$, the ball
$$ 
B(y,q_{K}^{-\alpha(K,z)})
$$
around $y$ of volume $q_{K}^{-\alpha(K,z)}$ is a maximal ball contained in $X_{K,z}$. Hence, $y\in B'\subset X_{K,z}$ implies $B'\subset B(y,q_{K}^{-\alpha(K,z)})$. It follows that
$$
t_{K}(z,s)+1\geq \alpha(K,z),
$$
which proves that
\begin{equation}\label{eq:y}
\ord(y-c_{K}(z,s))\geq \alpha(K,z)-1.
\end{equation}
Since $B$ has volume $q_{K}^{-\alpha(K,z)+1}$ and contains $y$, the inequality (\ref{eq:y}) implies that $c_{K}(z,s)\in B$ and it follows that $N_{K,z}\leq Q_{0}$.

If $X'_{K,z}$ is not a union of balls then it is contained in the range of $c_{K}(z,.):s\mapsto c_K(z,s)$, and thus also in this case we find $0=N_{K,z}\leq Q_{0}$. This shows we can take $Q=Q_0$. 
\end{proof}

\begin{cor}\label{28b}
Let $Z$ and $X\subset \VF^n\times Z$ be $\cL$-definable such that $X_K\subset \cO_K^n\times Z_K$ for all local fields $K$ of large residue field characteristic. Fix $m$ with $1\leq m\leq n$. Then
there exist positive integers $M$ and $Q$ such that
$$
Multinumber_{m}(X_{K,z},x)  < Q
$$
for all local fields $K$ with residue field characteristic at least $M$, for all $z\in Z_K$ and all $x\in \cO_K$ with $x\in MB(X_{K,z})$, where $X_{K,z}$ is the set $\{x\in \cO_K^n\mid (x,z)\in X_K\}$.
\end{cor}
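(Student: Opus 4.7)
The plan is to deduce this corollary from Lemma \ref{28a} applied to a one-variable auxiliary family parametrised by the enlarged base $Z\times\VF^{m-1}$. First I would introduce the $\cL$-definable family $\widetilde X\subset\VF\times(Z\times\VF^{m-1})$ with fibres
$$\widetilde X_{K,(z,w)} \;=\; \{y\in\cO_K \mid (w,y)\in\pi_m(MB(X_{K,z}))\},$$
where $\pi_m:\cO_K^n\to\cO_K^m$ is the projection onto the first $m$ coordinates. The definability of $MB(X_{K,z})$ uniformly in $z$ for local fields of large residue characteristic is the same statement already used in the proof of Lemma \ref{28a}, and projections are definable, so $\widetilde X$ is $\cL$-definable. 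By construction, for $x\in MB(X_{K,z})$ and $w=x_{\leq m-1}$ one has the equality $\widetilde X_{K,(z,w)}=X_{K,z}(x,m)$.

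The central step --- which I expect to be the only real obstacle --- is to verify that $MB(\widetilde X_{K,(z,w)})=\widetilde X_{K,(z,w)}$ for every $(z,w)$. By Definition \ref{24}, all multiballs contributing to $MB(X_{K,z})$ share one common multivolume, say $(q_K^{-r_1},\ldots,q_K^{-r_n})$. Unfolding the recursive Definition \ref{23}, the $m$-th coordinate fibre of each such multiball above a fixed $w$ is either empty or a ball (singleton, if $r_m=+\infty$) in $\cO_K$ of volume exactly $q_K^{-r_m}$. Consequently $\widetilde X_{K,(z,w)}$ is a union, inside the ultrametric ring $\cO_K$, of balls all of one and the same volume $q_K^{-r_m}$; any such union is automatically a disjoint union of balls of volume $\geq q_K^{-r_m}$, and so coincides with its own multibox. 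The delicate point here is precisely that the global maximal-multivolume condition built into the definition of $MB$ rules out the appearance of any finer structure in the slices that would escape the count produced by Lemma \ref{28a}.

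With this identification in hand I would apply Lemma \ref{28a} to $\widetilde X$ over the parameter space $Z':=Z\times\VF^{m-1}$, obtaining integers $M,Q_0>0$ such that for every local field $K$ of residue characteristic at least $M$ and every $(z,w)\in Z'_K$, the number of balls of minimal volume in $\cO_K$ meeting $MB(\widetilde X_{K,(z,w)})$ but not contained in it is bounded by $Q_0$. Specialising $w=x_{\leq m-1}$ with $x\in MB(X_{K,z})$ and using the identification $MB(\widetilde X_{K,(z,w)})=\widetilde X_{K,(z,w)}=X_{K,z}(x,m)$ from the previous paragraph, this same $Q_0$ bounds $Multinumber_m(X_{K,z},x)$, so one may take $Q:=Q_0+1$ to obtain the strict inequality demanded by the statement.
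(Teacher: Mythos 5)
Your overall plan—reduce to a one-variable family parametrised by $Z\times\VF^{m-1}$ and invoke Lemma~\ref{28a}—is the same approach the paper intends (the paper's proof just says "follows from Lemma~\ref{28a} since $x\in MB(X_{K,z})$ is $\cL$-definable"). The setup of $\widetilde X$, its definability, and the translation of the bound at the end are all fine. The gap is in the sentence "any such union is automatically a disjoint union of balls of volume $\geq q_K^{-r_m}$, and so coincides with its own multibox." That implication is false as stated: a set that is a (necessarily disjoint) union of balls all of volume $q_K^{-r_m}$ need \emph{not} equal its own multibox. If some, but not all, of these balls glue together into a ball of strictly larger volume, then that larger ball is a multiball of larger multivolume contained in the set, and $MB$ of the set is the union of only those maximal-volume balls; the leftover small balls that do not glue are discarded. (Concretely, with $q_K=2$: if the slice were $(a+\pi^2\cO_K)\cup(a+\pi+\pi^2\cO_K)\cup(b+\pi^2\cO_K)$ with $b+\pi+\pi^2\cO_K$ absent, its multibox would be only $a+\pi\cO_K$.) You acknowledge "the delicate point" but never actually close it, and as written the argument does not establish $MB(\widetilde X_{K,(z,w)})=\widetilde X_{K,(z,w)}$.

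What you need to show is that no ball of volume strictly larger than $q_K^{-r_m}$ can sit inside $\widetilde X_{K,(z,w)}$; once you have that, the slice really is a disjoint union of balls all of the \emph{same} volume $q_K^{-r_m}$ and hence equals its own multibox, and the rest of your argument goes through. This is where the colexicographic ordering of Definition~\ref{24} does the work, and your proof never uses it. Suppose $B'\subset\widetilde X_{K,(z,w)}$ had volume $q_K^{-r_m+1}$. For each $y\in B'$ there is a multiball $Y$ of the maximal multivolume $(q_K^{-r_1},\dots,q_K^{-r_n})$ inside $X_{K,z}$ whose fibre over $(w,y)$ in the last $n-m$ coordinates is a multiball of multivolume $(q_K^{-r_{m+1}},\dots,q_K^{-r_n})$. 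Assembling these, the set $\{w\}\times B'$ together with those fibres is a multiball contained in $X_{K,z}$ of multivolume $(q_K^{-\infty},\dots,q_K^{-\infty},q_K^{-r_m+1},q_K^{-r_{m+1}},\dots,q_K^{-r_n})$. In the colexicographic order this is strictly larger than $(q_K^{-r_1},\dots,q_K^{-r_n})$ (the last $n-m$ entries agree and the $m$-th entry increased, regardless of the collapse of the first $m-1$ entries), contradicting maximality. Without this step your "delicate point" remains an assertion, so I would count the proof as incomplete rather than merely terse.
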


\begin{proof}
The corollary follows from Lemma \ref{28a} since the condition $x\in MB(X_{K,z})$ is an $\mathcal{L}$-definable condition on $(x,z)$.
\end{proof}

%
%

\begin{lem}\label{30}
Let $Z$ and $X\subset \VF^n\times Z$ be $\cL$-definable such that $X_K\subset \cO_K^n\times Z_K$ for all local fields $K$ of large residue field characteristic.
Then there exist $M>0$ and a definable function $f:Z\rightarrow (\VG\cup\lbrace +\infty\rbrace)^{n}$ such that $$
(q_{K}^{-f_{i,K}(z)})_{i=1}^{n} = multivol(X_{K,z})
$$
for each $z\in Z_K$ and each local field $K$ with residue field characteristic at least $M$, where $X_{K,z}$ is the set $\{x\in \cO_K^n \mid (x,z)\in X_K\}$.
\end{lem}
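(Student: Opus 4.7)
The plan is to prove this by induction on $n$, working inside the theory $\cT$ so that a single $\cL$-definable function serves all models at once: property $(*)$, specifically finite $b$-minimality (Definition \ref{03}) and the split property (Definition \ref{02}), guarantees the required $\cL$-definability at each stage, and property $(**)$ then provides the uniform constant $M>0$ under which the construction specializes correctly to local fields of large residue field characteristic. The base case $n=1$ is direct: for nonempty $X_{K,z}\subset\cO_K$, the multivolume is $q_K^{-r}$ with $r$ the smallest nonnegative integer such that a ball of volume $q_K^{-r}$ lies inside $X_{K,z}$, and $r=+\infty$ otherwise; the containment ``$\exists a\in\cO_K,\ \forall b\in\cO_K,\ \ord(b-a)\geq r \Rightarrow (b,z)\in X$'' is an $\cL$-formula in $(r,z)$, so the split property shows that the infimum over $r\in\VG_{\geq 0}$ is $\cL$-definable in $z$.

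For the inductive step, assume the result for $n-1$ and introduce the $\cL$-definable family
\begin{equation*}
W := \bigl\{(x_{\leq n-1},z,t)\in\cO_K^{n-1}\times Z\times\VG_{\geq 0} \ : \ Y_{x_{\leq n-1}}^{z}\text{ contains a ball of volume }q_K^{-t}\bigr\},
\end{equation*}
where $Y_{x_{\leq n-1}}^{z}:=\{x_n\in\cO_K:(x_{\leq n-1},x_n,z)\in X\}$. Viewing $W$ as a definable subset of $\cO_K^{n-1}\times(Z\times\VG)$, the inductive hypothesis furnishes a definable $g:Z\times\VG\to(\VG\cup\{+\infty\})^{n-1}$ with $(q_K^{-g_i(z,t)})_i = multivol(W_{t,z})$ on every nonempty fiber. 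Define $f_n(z)$ to be the minimum $t\in\VG_{\geq 0}$ such that $W_{t,z}\neq\emptyset$, which is $\cL$-definable by split; in the separate definable branch where no such $t$ exists, set $f_n(z)=+\infty$ and instead apply the inductive hypothesis to the projection $\pi(X)\subset\cO_K^{n-1}\times Z$ to obtain $f_1,\dots,f_{n-1}$. Otherwise put $f_i(z):=g_i(z,f_n(z))$ for $i<n$. Since colexicographic maximization first minimizes $r_n$ and then maximizes the remaining $(n-1)$-tuple among multiballs $A\subset W_{f_n(z),z}$, the recursive form of a multiball forces $(q_K^{-f_i(z)})_{i=1}^n = multivol(X_{K,z})$ on every nonempty fiber.

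The main technical point is the uniform, definable treatment of the ``$+\infty$'' cases, corresponding to fibers with no interior: this requires a clean case-split so that every minimization stays inside the Presburger fragment and the inductive hypothesis is applied to an appropriate definable set (some $W_{t,z}$, or $\pi(X)$). Once this bookkeeping is arranged the induction runs mechanically, and the constant $M>0$ is produced by property $(**)$ applied to the finite fragment of $\cT$ needed to certify the $\cL$-definability of $f$.
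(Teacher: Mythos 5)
Your inductive argument is correct and simply spells out what the paper dispatches in one line (``follows easily by the definability of multiboxes and of the valuative radius of the balls involved''): since a multiball is fibered with its last coordinate a ball of constant radius over a multiball base, colexicographic maximization of multivolume indeed reduces to first minimizing $r_n$ via the definable family $W$ and then recursing on $W_{f_n(z),z}$, with a clean side branch through $\pi(X)$ when $r_n=+\infty$. One small inaccuracy: the definability of $\min\{t\in\VG_{\geq 0}:W_{t,z}\neq\emptyset\}$ is not a consequence of the split property but just of first-order definability of the least element together with the fact that $\cT$-fields have value group $\ZZ$ (so bounded-below nonempty definable subsets have minima); this does not affect the correctness of the argument.
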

\begin{proof}
The lemma follows easily by the definability of multiboxes and of the valuative radius of the balls involved.
\end{proof}

\begin{proof}[Proof of the main theorem \ref{****}]
Firstly, we can suppose that the sets $R_{K,z}$ for $z\in Z(K)$ are subsets of $\mathcal{O}_{K}^{n}$, up to increasing $n$ and mapping a coordinate $w\in K$ to $(w,0)\in\mathcal{O}_{K}^{2}$ if $\ord(w)\geq 0$ and to $(0,w^{-1})\in\mathcal{O}_{K}^{2}$ if $\ord(w)<0$.

By Lemma \ref{30}, there is a definable function $f_{K,z}:\mathcal{O}_{K}^{n}\rightarrow (\mathbb{Z}\cup\lbrace +\infty\rbrace)^{n}$ such that
$$
(q_{K}^{-f_{K,z,i}(x)})_{i=1}^{n} = multivol(x/_{\sim_{K,z}}),
$$
where $x/_{\sim_{K,z}}$ denotes the equivalence class of $x$ under the equivalence relation $\sim_{K,z}$. 

For each subset $I$ of $\lbrace 1,...,n\rbrace$, we set $R_{K,z,I}=\lbrace x\in R_{K,z}| f_{K,z,i}(x)< +\infty \Leftrightarrow i \in I\rbrace$. So, $R_{K,I}=(R_{K,z,I})_{z\in Z(K)}$ is defined by an $\cL$-formula for each $I$, uniformly in $z$. It is easy to see that $R_{K,z}=\coprod_{I\subset\lbrace 1,...,n\rbrace} R_{K,z,I}$ and  $(x_{\sim_{K,z}}y)\wedge (x\in R_{K,z,I})\Rightarrow y\in R_{K,z,I}$. So if we set $a_{K,z,I}=R_{K,z,I}/_{\sim_{K,z}}$ then $a_{K,z}=\sum_{I\subset\lbrace 1,...,n\rbrace} a_{K,z,I}$. The proof will be followed by the claim for each $I\subset\{1,...,n\}$. We will consider two cases.

{\bf Case 1: $I$ is the empty set.}

Because of the definition of $R_{K,I}$, we deduce that for each $z\in Z(K)$ the definable set $R_{K,z,I}$ will be a finite set.
From the proof of lemma \ref{28b} we have that  $a_{K,z,I}$ must be bounded uniformly on all local field $K$ with large residue field characteristic and $z\in Z(K)$. We can assume that $a_{K,z,I}\leq Q$ for all $z\in Z(K)$ and $\charac(k_{K})>M$. For each $1\leq d\leq Q$, we see that the condition $a_{K,z,I}=d$ will be  an $\cL$-definable condition  in $K, z$. Write 
$$
Z_{d}(K)=\lbrace z\in Z(K)|a_{K,z,I}=d\rbrace
$$
and set
$$
F_{K}(z)=\sum_{i=1}^{n}d\11_{Z_{d}},
$$
where $\11_{Z_{d}}$ stands for the characteristic function of $Z_{d}$. 
Then it is obvious that $F_{K}(z)=a_{K,z,I}$ and that we can ensure that $F\in\tilde{C}_{+}(Z)$. 

{\bf Case 2: $I$ is not the empty set.} 

 By a similar argument as for Case 1, we may and do suppose that $I=\lbrace 1,...,n\rbrace$. To simplify the notation, $R_{K,I}$ will be rewritten as $R_{K}$. 

The claim does not change if we remove from $R_{K,z}$ any $x$ with $x\notin MB(x/_{\sim_{K,z}})$, and thus,  we can assume that  
$$
R_{K,z}=\cup_{x\in R_{K,z}}MB(x/_{\sim_{K,z}}).
$$ 

Consider a definable subset $D_{K}$ of $R_{K}\times k_{K}^{n}$, described as follows:  

For each $z\in Z_K$ and each $x=(x_{1},...,x_{n})\in R_{K,z}$, the fiber $D_{K,x,z}$ of $D_K$ over $x$ and $z$ is
$$
\lbrace (\xi_{1},...,\xi_{n})\in k_{K}^{n}| \wedge_{m=1}^n \big( (\xi_{m}=0)\vee (\exists y\in x/_{\sim_{K,z}}(x,m)
$$
$$
 [(\ord(x_{m}-y)=f_{K,z,m}-1)\wedge (\xi_{m}=\overline{ac}(x_{m}-y))]) \big) \rbrace.
$$

From the definition of $D$ we observe that
$$
\# (D_{K,x,z})=\prod_{m=1}^{n}\# (D_{K,x,z,m})
$$
where $D_{K,z,x,m}$ the image under the projection of $D_{K,z,x}$ to the $m$-th coordinate. 
Moreover,  if $B$ is a ball of volume $q_{K}^{-f_{K,z,m}+1}$ such that $x_{m}\in B$ then $B\cap x/_{\sim_{K,z}}(x,m)$ is disjoint union of $\# (D_{K,z,x,m})$ balls of volume $q_{K}^{-f_{K,z,m}}$. It follows that  
$$
\# (D_{K,z,x,m})=\# (D_{K,z,x',m})
$$
for all $x$, $x'$ with $(x'\sim_{K,z}x)\wedge(x_{\leq m-1}=x'_{\leq m-1})\wedge (x'_{m}\in B)$ and so we can denote this number by $N_{x/_{\sim_{K,z}},x_{\leq m-1},B}$.

By Lemma \ref{30}, the function  $f_{K}(x,z)=f_{K,z}(x)=\sum_{i=1}^{n}f_{K,z,i}(x)$ is an $\cL$-definable function  from $R_{K}$ to $\mathbb{Z}$. By Corollary \ref{28b}, there exist $M$ and  $Q$ in 
$\mathbb{N}$ such that 
$$
\prod_{m=1}^{n}Multinumber_{m}(x/_{\sim_{K,z}},x)\leq Q
$$
for every  $(x,z)\in R_{K}$ and all $K$ with $\charac(k_{K})>M$. Hence, for each $d$ with $1\leq d\leq Q$ the set
$$
R_{K}(d):=\lbrace (x,z)\in R_{K}|\prod_{m=1}^{n}Multinumber_{m}(x/_{\sim_{K,z}},x)=d\rbrace
$$
is an  $\mathcal{L}$-definable subset of $R_{K}$.
Consider a rational motivic constructible function $g\in\tilde\cC(R)$ such that  
$$
g_K(x,z)=\sum_{d=1}^{Q}\dfrac{\11_{R_{K}(d)}(x,z)}{d}.
$$
Here, the denominator $d$ can be viewed as a set of $d$ $\emptyset$-definable points in the residue field sort. 
Next we define the rational motivic constructible function $\Phi\in \tilde{\mathcal{C}}_{+}(R)$ by
\begin{center}
$\Phi= \dfrac{g\cdot \LL^{f}}{[D]}$,
\end{center}
where the map $D\to R$ comes from the coordinate projection.
By the definition, $\Phi$ is $Z$-integrable is we have that $f_{L,z}$ is bounded above on $R_{L,z}$ for each $z\in Z(L)$ and each $\cT$-field $L$. The fact that $f_{L,z}$ is bounded above is a definable condition, hence, up to replacing $f$ so that it is zero if $f_{L,z}$ is not bounded above, we may suppose that $f_{L,z}$ is bounded above on $R_{L,z}$ for each $z\in Z(L)$ and each $\cT$-field $L$ and thus that $\Phi$ is $Z$-integrable. 
Set 
$$
F=\mu_{/Z}(\Phi) \mbox{ in } \tilde{\mathcal{C}}_{+}(Z).
$$
Finally we prove that $a_{K,z}=F_{K}(z)$ for all $z\in Z(K)$ and all local fields $K$ with $\charac(k_{K})>M$.
If $\charac(k_{K})>M$ for well-chosen $M$ we have  
$$
F_{K}(z) = \mu_{/\{z\}}(\Phi|_{R_{K,z}})=\int_{x\in R_{K,z}}\frac{g_{K}(x,z)q_{K}^{f_{K}(x,z)}}{\#(D_{K,x,z})}|dx|
$$
where $|dx|$ is the Haar measure on $K^{n}$.
Let $J_{z}$ be a set of $a_{K,z}$ representatives of $\sim_{K,z}$.
Thus, 
\begin{align*}
&\int_{x\in R_{K,z}}\frac{g_{K}(x,z)q_{K}^{f_{K}(x,z)}}{\#(D_{K,x,z})}|dx|\\
= &\sum_{x\in J_{z}}\int_{y\in x/_{\sim_{K,z}}}\frac{g_{K}(y,z)q_{K}^{f_{K}(y,z)}}{\#(D_{K,y,z})}|dy|
\end{align*}
Let us for a moment fix $x$ and write $X=x/_{\sim_{K,z}}$ and  $d_{m}(y)=Multinumber_{m}(y/_{\sim_{K,z}},y)$. Then, using Fubini's theorem, we have
\begin{align*}
&\int_{y\in X}\frac{g_{K}(y,z)q_{K}^{f_{K}(y,z)}}{\#(D_{K,y,z})}|dy|\\
=& \int_{\overline{y}\in X(n-1)}\int_{y_{n}\in X_{\overline{y}}}\frac{q_{K}^{f_{K,z,1}(x)+...+f_{K,z,n}(x)}}{\prod_{m=1}^{n}[d_{m}(\overline{y},y_{n})\times\#(D_{K,z,(\overline{y},y_{n}),m})]}
|dy_{n}|\cdot  |d\overline{y}|\\
=&\int_{\overline{y}\in X(n-1)}\frac{q_{K}^{f_{K,z,1}(x)+...+f_{K,z,n-1}(x)}}{\prod_{m=1}^{n-1}[d_{m}(\overline{y})\times\#(D_{K,z,(\overline{y},y_{n}),m})]}
|d\overline{y}|,
\end{align*}
where $f_{K,z,i}(y)=f_{K,z,i}(x)$ for all $y\in X$ and $1\leq i\leq n$ and where $d_{m}(\overline{y}):=d_{m}(\overline{y},y_{n})$ does not depend on $y_{n}$ when $y=(\overline{y},y_{n})$ varies in $X$ for all $1\leq m\leq n-1$.
The last equality comes from:
\begin{align*}
&\int_{y_{n}\in X_{\overline{y}}}\frac{q_{K}^{f_{K,z,n}(x)}}{d_{n}(\overline{y})\times\#(D_{K,z,(\overline{y},y_{n}),n})}
|dy_{n}|\\
=&\frac{q_{K}^{f_{K,z,n}(x)}}{d_{n}(\overline{y})}\sum_{i=1}^{d_{n}(\overline{y})}\int_{y_{n}\in B_{i}\cap X_{\overline{y}}}\frac{1}{\#(D_{K,z,(\overline{y},y_{n}),n})}|dy_{n}|\\
=&\frac{q_{K}^{f_{K,z,n}(x)}}{d_{n}(\overline{y})}\sum_{i=1}^{d_{n}(\overline{y})}\Vol(B_{i}\cap X_{\overline{y}})\frac{1}{N_{x/_{\sim_{K,z}},\overline{y},B_{i}}}\\
=&\frac{q_{K}^{f_{K,z,n}(x)}}{d_{n}(\overline{y})}\sum_{i=1}^{d_{n}(\overline{y})}q_{K}^{-f_{K,z,n}(x)}\\
=&1
\end{align*}
where $B_{1},...,B_{d_{n}(\overline{y})}$ are the balls with volume $q_{K}^{-f_{K,z,n}(x)+1}$ which have nonempty intersection with $X_{\overline{y}}$; indeed, one sees that $B_{i}\cap X_{\overline{y}}$ is the union of $N_{x/_{\sim_{K,z}},\overline{y},B_{i}}$ many disjoint balls of volume $q_{K}^{-f_{K,z,n}(x)}$. 
By applying Fubini's theorem with a similar calculation to each of the remaining $n-1$ variables we deduce that 
$$
\int_{y\in X}\frac{g_{K}(y,z)q_{K}^{f_{K}(y,z)}}{\#(D_{K,y,z})}|dy|=1.
$$
We conclude that 
$$F_K(z) = \mu_{/\{z\}}(\Phi|_{R_{K,z}})= \#J_{z}=a_{K,z}$$
for all local fields $K$ with $char(k_{K})>M$ and $z\in Z(K)$. The main theorem is proved.
\end{proof}

\subsection*{Acknowledgement}

I would like to thank Raf Cluckers for his helpful discussions and suggestions during the preparation of this paper. I also thank to Pablo Cubides Kovacsics for some comments. The author are supported by the European Research Council under the European Community's Seventh Framework Programme (FP7/2007-2013) with ERC Grant Agreement nr. 615722
MOTMELSUM, and thank the Labex CEMPI  (ANR-11-LABX-0007-01).

\begin {thebibliography}{99}

\bibitem{01} S. E. Borevich, I. R. Shafarevich, {\it Number Theory}, Academic Press (1966).

\bibitem{02} R. Cluckers, {\it Analytic $p$-adic cell decomposition and integrals}. Trans. Amer. Math. Soc.{\bf 356}, no.4, 1489-1499 (2004).

\bibitem{06} R. Cluckers and L. Lipshitz, {\it Fields with Analytic Structure}, J. Eur. Math. Soc. (JEMS) {\bf 13}, 1147--1223 (2011).

\bibitem{06b} R. Cluckers and L. Lipshitz, {\it Strictly convergent analytic structures}, to appear in J. Eur. Math. Soc. (JEMS).

\bibitem{03} R. Cluckers, L. Lipshitz, Z. Robinson {\it Analytic cell decomposition and analytic motivic integration}, Ann. Sci. \'Ecole Norm. Sup., 39, no. 4, 535--568 (2006).

\bibitem{04} R. Cluckers, F. Loeser, {\it Constructible motivic functions and motivic integration}, Inventiones Mathematicae, Vol. 173, No. 1, 23--121 (2008)


\bibitem{07} R. Cluckers and F. Loeser, {\it Motivic integration in all residue field characteristics for Henselian discretely valued fields of characteristic zero }, J. Reine und Angew. Math. DOI 10.1515/ crelle-2013-0025, Vol. 701, 1--31 (2015).

\bibitem{08} R. Cluckers and  F. Loeser, {\it Ax-Kochen-Ershov Theorems for p-adic integrals and motivic integration}, in Geometric methods in algebra and number theory, Y. Tschinkel and F. Bogomolov (Eds.), (2005) Proceedings of the Miami Conference 2003. 

\bibitem{08Co} P. J. Cohen, {\it Decision procedures for real and $p$-adic fields},
Comm. Pure Appl. Math., {\bf 22}, 131-151 (1969).

\bibitem{09} J. Denef, { \it The rationality of the Poincar\'e series associated to the $p$-adic points on a variety}, Inventiones Mathematicae {\bf 77}, 1--23 (1984).

\bibitem{09b} J. Denef, { \it Report on {I}gusa's local zeta function}, S\'eminaire Bourbaki,
{\bf 1990/91}, Exp. No.730--744, 359--386, Ast\'erisque 201-203 (1991).



\bibitem{11} J. Denef and L. van den Dries, {\it $p$-adic and real subanalytic sets}, Annals of Math. {\bf 128}, no. 1, 79-138 (1988).


\bibitem{13b} L. van den Dries, {\it Analytic Ax-Kochen-Ersov theorems}, Proceedings of the International Conference on Algebra, Part 3 (Novosibirsk, 1989), 379--398, Contemp. Math., 131, Part 3, Amer. Math. Soc., Providence, RI (1992).

\bibitem{14}  D. Haskel, E. Hrushovski and D. Macpherson, {\it Definable sets in algebraically closed valued fields: elimination of imaginaries}, J. Reine und Angew. Math. {\bf 597}, 175--236 (2006).

\bibitem{15}D. Haskell, E. Hrushovski, D. Macpherson, {\it Unexpected imaginaries in valued fields with analytic structure}, J. Symbolic Logic {\bf 78} (2013), no. 2, 523–542. 

\bibitem{16} E. Hrushovski, B. Martin, S. Rideau with an appendix by R. Cluckers, {\it Definable equivalence relations and Zeta functions  of groups}, arXiv:math/0701011v2.


\bibitem{20} J. I. Igusa, {\it Complex powers and asymptotic expansions}, I. J. Reine angew. Math. {\bf 268/269}, 110-130 (1974); II ibid. 278/279, 307--321 (1975).

\bibitem{21} J. I. Igusa, {\it Lectures on forms of higher degree (notes by {S}. {R}aghavan)},
 Springer-Verlag, Lectures on mathematics and physics {\bf 59}, Tata institute of fundamental research (1978).

\bibitem{17} A. Macintyre, {\it On definable subsets of $p$-adic fields}, J. Symb. Logic {\bf 41}, 605--610 (1976).

\bibitem{MacUnif} A. Macintyre, {\it Rationality of {$p$}-adic {P}oincar\'e series: uniformity in
              {$p$}}, Ann. Pure Appl. Logic, {\bf 49}, No. 1, 31--74 (1990).

\bibitem{18}  D. Marker, {\it Model Theory: An introduction}, Graduate texts in mathematics, Springer-Verlag, Graduate Texts in Mathematics {\bf 217} (2002).


\bibitem{22P} J. Pas, {\it Uniform $p$-adic cell decomposition and local zeta functions}, Journal f\"ur die reine und angewandte Mathematik,  {\bf 399}, 137--172 (1989).

\bibitem{23}  J.-P. Serre, {\it Quelques applications du th\'eor\`eme de densit\'e de Chebotarev}. (French) [Some applications of the Chebotarev density theorem] Inst. Hautes \'etudes Sci. Publ. Math. No. 54, 323--401 (1981).


\end {thebibliography}
\end{document}